\documentclass[11pt]{amsart}
\usepackage{categorytheory}
\fullpage

\newcommand{\bs}{\smallsetminus}
\newcommand{\V}{\mathcal{V}}

\newcommand{\LL}{\mathbb{L}}
\newcommand{\Asb}{\mathbf{Asm}}
\newcommand{\A}{\mathbb{A}}
\renewcommand{\P}{\mathbb{P}}
\newcommand{\ab}{{\mathrm{ab}}}
\newcommand{\defeq}{\stackrel{\mathrm{def}}{=}}

\renewcommand{\dot}{{{\raisebox{.2ex}{\scalebox{.3}{$\bullet$}}}}}
\DeclareMathOperator{\hocofib}{cofib}


\newcommand{\rdash}{\inlineArrow{->,densely dashed}}
\newcommand{\qqandqq}{\qquad\hbox{and}\qquad}

\newtheorem{question}{Question}
\newtheorem*{conjecture}{Conjecture}
\newtheorem{maintheorem}{Theorem}

\newtheorem*{theorem:zero=>notinj}{Theorem \ref{thm:zero=>notinj}}
\newtheorem*{theorem:kernelL}{Theorem \ref{thm:kernelL}}
\newtheorem*{theorem:ll}{Theorem \ref{thm:ll}}
\newtheorem*{theorem:graded}{Theorem \ref{thm:graded}}

\theoremstyle{remark}
\newtheorem{example}[equation]{Example}

\begin{document}

\title{The annihilator of the Lefschetz motive}
\author{Inna Zakharevich}
\begin{abstract}
  In this paper we study a spectrum $K(\V_k)$ such that $\pi_0 K(\V_k)$ is the
  Grothendieck ring of varieties and such that the higher homotopy groups
  contain more geometric information about the geometry of varieties.  We use
  the topology of this spectrum to analyze the structure of $K_0[\V_k]$ and show
  that classes in the kernel of multiplication by $[\A^1]$ can always be
  represented as $[X] - [Y]$, where $[X] \neq [Y]$, $X\times \A^1$ and $Y\times
  \A^1$ are not piecewise isomorphic, but $[X\times \mathbb{A}^1] =[Y \times
  \mathbb{A}^1]$ in $K_0[\V_k]$.  Along the way we present a new proof of the
  result of Larsen--Lunts on the structure on $K_0[\V_k]/([\A^1])$.
\end{abstract}

\maketitle

The Grothendieck ring of varieties over a field $k$---denoted $K_0[\V_k]$---is
defined to be the free abelian group generated by varieties over $k$, modulo the
relation
\[[X] = [Y] + [X\bs Y] \qquad \text{for $Y$ a closed subvariety of $X$}.\] 
The ring structure is given by the formula $[X][Y] = [X\times Y]$.
This ring was first introduced by Grothendieck in a letter to Serre in 1964, and has
since appeared in various places in the study of motivic integration and
birational geometry.  (For more on the Grothendieck ring, see for example
\cite{larsenlunts03}, \cite{lamysebag}, \cite{liusebag}.)  This ring is quite
complicated; for example, it is not an integral domain
(\cite{poonen02}).  

There are two important structural questions about $K_0[\V_k]$.  The
Grothendieck ring of varieties comes with a filtration on the generators given
by the dimension of the variety, where the $n$-th graded piece of the filtration
is given by those elements that can be represented as a formal sum of varieties
of dimension at most $n$.  Equivalently, the $n$-th graded piece is the image of
the homomorphism
\[\psi_n:\Z[X \,|\, \dim X \leq n]/([X] = [Y] + [X\bs Y]) \rto K_0[\V_k].\]

In \cite[Question 1.2]{larsenlunts03} (and, equivalently, in
\cite[p121]{gromov99}) the following question is asked:
\begin{question}\lbl{q:ll}
  Is $\psi_n$ injective?  Equivalently, do all birational automorphisms of
  varieties extend to piecewise isomorphisms?
\end{question}
By a piecewise isomorphism between varieties $X$ and $Y$ we mean a pair of
stratifications of $X$ and $Y$ and isomorphisms of corresponding strata.  By the
definition of $K_0[\V_k]$, if $X$ and $Y$ are piecewise isomorphic then $[X] =
[Y]$ in $K_0[\V_k]$; the question is interested in the converse of this
statement.  In recent work of Borisov \cite{borisov14} and Karzhemanov
\cite{karzhemanov14} counterexamples to the injectivity of $\psi_n$ are
constructed when $k$ is a subfield of $\mathbf{C}$, and we expect that in fact
it will not be injective over any $k$.  The associated graded of the filtration
induced on $K_0[\V_k]$ by the images of $\psi_n$ is still unknown.

The second structural question concerns multiplication by the Lefschetz motive
$\LL \defeq [\A^1]$.  In Kontsevich's definition of the motivic integral, the
motivic measure has values in $K_0[\V_k]$, but is only well-defined up to a
power of $\LL$.  Thus in order for motivic integration to be well-defined it is
necessary to invert $\LL$ and consider the ring $K_0[\V_k][\LL^{-1}]$.  (For more
on motivic integration, see for example \cite{denefloeser99}.)  This leads to
the following question.
\begin{question} \lbl{q:L}
  Is $\LL$ a zero divisor?
\end{question}
This question was also recently resolved by Borisov in \cite{borisov14}.  In
fact, Borisov's main result was to construct an element in the kernel of
multiplication by $\LL$, and, seemingly coincidentally, his method also
constructed an element in the kernel of $\psi_n$.  A precise algebraic
description of the kernel of multiplication by $\LL$, and, more interestingly, of
the kernel of the localization $K_0[\V_k] \rto K_0[\V_k][\LL^{-1}]$, is still
unknown.

In this paper we propose to replace the Grothendieck ring of varieties with a
spectrum $K(\V_k)$, which we call the \textsl{Grothendieck spectrum of
  varieties}.\footnote{For those readers unfamiliar with stable homotopy, almost
  all results in the paper still hold true if ``spectrum'' is replaced
  everywhere with ``homology theory.''  In particular, all homotopy groups of a
  spectrum are abelian groups.}  This spectrum has the property that
$\pi_0K(\V_k) \cong K_0[\V_k]$, and that its higher homotopy groups contain
further geometric information about the decomposition of varieties.  This
spectrum encodes additional geometric information which is lost when passing to
the Grothendieck ring.  We use topological tools to study
the structure of this spectrum and illuminate a precise relationship between 
Questions \ref{q:ll} and \ref{q:L}.

The spectrum $K(\V_k)$ also comes with a dimension filtration which comes from
the dimension of the varieties which ``generate'' the spectrum.  Although the
associated graded of $K_0[\V_k]$ is unknown, it turns out that the associated
graded of this spectrum has a beautifully simple description.
\begin{maintheorem} \lbl{thm:graded} The category $\V_k$ of varieties over $k$ and
  locally closed inclusions comes with a filtration, where the $n$-th filtered
  piece $\V_k^{(n)}$ is generated by varieties of dimension $n$.  This
  filtration gives rise to a filtration on $K(\V_k)$, whose $n$-th graded piece
  is
  \[\bigvee_{[X]\in B_n} \left(\Sigma^\infty B\Aut_k k(X)_+\right),\]
  where $B_n$ is the set of birational isomorphism classes of varieties of
  dimension $n$ and $\Aut_k k(X)$ is the group of birational automorphisms of
  the variety $X$.
\end{maintheorem}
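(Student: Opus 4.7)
The plan is to analyze the dimension filtration one level at a time and identify each graded piece via a cofiber sequence
\[K(\V_k^{(n-1)}) \rto K(\V_k^{(n)}) \rto \mathrm{gr}_n K(\V_k).\]
First I would interpret the right-hand term as the K-theory of the ``quotient'' of $\V_k^{(n)}$ obtained by formally identifying to zero every variety of dimension strictly less than $n$. The key observation is that in this quotient, a dimension-$n$ variety $X$ becomes equivalent to every dense open $U\subseteq X$, because the complement $X\bs U$ has dimension $<n$ and is therefore killed. Consequently two varieties of dimension $n$ represent the same class in the quotient precisely when they share a dense open, i.e.\ when they are birationally equivalent.

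Second, because any locally closed inclusion of codimension zero between two irreducible dimension-$n$ varieties forces them to lie in the same birational class, the quotient splits as a disjoint union of subcategories indexed by $B_n$. Since the K-theory construction sends coproducts of (symmetric monoidal, or assembler-like) inputs to wedges of spectra, this gives a splitting
\[\mathrm{gr}_n K(\V_k) \simeq \bigvee_{[X]\in B_n} K\bigl(\V_k^{(n),[X]}/\V_k^{(n-1)}\bigr).\]

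Third, for a fixed birational class represented by $X$, I would identify the corresponding summand by noting that once lower-dimensional strata are killed, the only relevant data of a variety in the class $[X]$ is the generic point together with its $\Aut_k k(X)$-action; every locally closed inclusion of dense opens becomes invertible in the quotient because its complement (of dimension $<n$) has been trivialized. Thus the localized category is equivalent to the symmetric monoidal envelope of the one-object groupoid $B\Aut_k k(X)$, and its K-theory is $\Sigma^\infty B\Aut_k k(X)_+$ by Barratt--Priddy--Quillen.

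The main obstacle is Step 3: rigorously showing, in whatever categorical framework is used to define $K(\V_k)$, that locally closed inclusions of dense opens become invertible after killing everything of dimension $<n$, so that the localized subcategory really is a symmetric monoidal groupoid with a single isomorphism class of object. Making this precise requires care about how ``locally closed inclusion + scissors relations'' is encoded at the spectrum level (as a Waldhausen-style or assembler-style structure), and then verifying the hypotheses of the relevant group-completion theorem. Once that is in place, the wedge decomposition and the identification of each summand follow formally.
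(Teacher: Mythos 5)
Your outline matches the paper's proof essentially step for step: the graded piece is identified with $K(\V_k^{(n,n-1)})$ (the quotient assembler killing dimension $<n$) via the cofiber theorem, a devissage/approximation theorem reduces to irreducible varieties lying over chosen representatives so that the assembler splits as a wedge over $B_n$, and each summand is identified with $\Sigma_+^\infty B\Aut_k k(X)$. The one caveat is that your Step 3 is not carried out by literally inverting dense open inclusions in a localized groupoid; in the paper it is the content of a cited computation (\cite[Theorem 2.1(1)]{Z-Kth-ass}) for the assembler of dense opens of a fixed $X_\alpha$ --- but you correctly flag this as the point where the real work lives.
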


This theorem implies that there exists a spectral sequence in stable homotopy
\[E^1_{p,q} = \bigoplus_{[X]\in B_q} \pi_p^s B\Aut_k k(X) \oplus
\pi_p^s S^0 \Rto \pi_p K(\V_k);\] in particular, the $p=0$ column converges to
$K_0[\V_k]$.\footnote{This grading is inspired by the grading for the Adams
  Spectral Sequence, and is optimized so that our spectral sequence is a
  first-quadrant spectral sequence and fits comfortably on the page.  See
  Figure~\ref{fig:ss-kv} on page \pageref{fig:ss-kv}.}  By
analyzing the construction of $K(\V_k)$ we can compute the differentials between
the $1$-st and $0$-th columns in the spectral sequence to get the following:
\begin{maintheorem} \lbl{thm:diff=ker}
  There exist nonzero differentials between the $1$-st and $0$-th column of
  some page of the spectral sequence if and only if $\psi_n$ has a nonzero
  kernel for some $n$.
\end{maintheorem}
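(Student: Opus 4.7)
The plan is to identify, for each $n$, the edge map $E^1_{0,n} \twoheadrightarrow E^\infty_{0,n}$ along the $p=0$ column with the ``leading-term'' associated graded of $\psi_n$, and then to observe that all differentials entering column $p=0$ originate in column $p=1$.

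First I would identify the filtration on $K_0[\V_k] = \pi_0 K(\V_k)$ produced by the spectral sequence with the image filtration $F_n \defeq \mathrm{image}(\psi_n)$: the spectral sequence arises from the filtration $K(\V_k^{(0)}) \to K(\V_k^{(1)}) \to \cdots \to K(\V_k)$, and $\psi_n$ is precisely $\pi_0$ of the map $K(\V_k^{(n)}) \to K(\V_k)$. By Theorem~\ref{thm:graded}, $E^1_{0,n} = \Z[B_n]$ (taking $\pi_0^s$ of a wedge of connected spaces with disjoint basepoint), and $E^\infty_{0,n} = F_n/F_{n-1}$. Chasing the edge map through the filtration identifies it with the ``leading-term'' homomorphism
\[\bar\psi_n \colon \Z[B_n] \to F_n/F_{n-1}, \qquad [X] \mapsto [X] \bmod F_{n-1},\]
which is well-defined on birational classes because birationally equivalent $n$-dimensional varieties agree modulo $F_{n-1}$ by the same scissors argument underlying Larsen--Lunts.

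Second I would show that $\ker \psi_n \neq 0$ for some $n$ if and only if $\ker \bar\psi_n \neq 0$ for some $n$. For the ``if'' direction, pick $0 \neq \beta \in \ker \bar\psi_n$ and lift to $\tilde\beta \in \pi_0 K(\V_k^{(n)})$; since $\psi_n(\tilde\beta) \in F_{n-1}$, subtracting a preimage under $\psi_{n-1}$ gives an element of $\ker \psi_n$ that is nonzero (otherwise $\tilde\beta$ would lie in $\pi_0 K(\V_k^{(n-1)})$, forcing $\beta = 0$). For the ``only if'' direction, pick $n$ minimal with $\ker \psi_n \neq 0$, so that $\psi_{n-1}$ is injective; any $0 \neq \alpha \in \ker \psi_n$ cannot lie in $\pi_0 K(\V_k^{(n-1)})$ (else it would yield a nonzero element of $\ker \psi_{n-1}$), so its image $\bar\alpha \in \Z[B_n]$ is nonzero while $\bar\psi_n(\bar\alpha) = [\psi_n(\alpha)] = 0$.

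Finally, since the differentials of the spectral sequence decrease $p$ by exactly one, no differentials exit column $p=0$ and every differential entering $(0,n)$ comes from column $p=1$. Thus the kernel of $E^1_{0,n} \twoheadrightarrow E^\infty_{0,n}$ is the cumulative image of differentials from column $1$ into $(0,n)$. Therefore some $\bar\psi_n$ has nonzero kernel if and only if some differential from column $1$ to column $0$ is nonzero on some page; combined with the previous step this proves the theorem. I expect the main obstacle to be Step~1, specifically the identification of the edge map with $\bar\psi_n$: this requires tracking through the proof of Theorem~\ref{thm:graded} to see that the generator of $\pi_0^s B\Aut_k k(X)_+$ indexed by $[X] \in B_n$ lifts, under the filtration $K(\V_k^{(n-1)}) \to K(\V_k^{(n)})$, to the class of a representative variety.
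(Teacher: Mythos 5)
Your proof is correct, but it takes a genuinely different route from the paper. The paper deduces Theorem~\ref{thm:diff=ker} from Theorem~\ref{thm:ll<=>diffs}, an element-wise obstruction theory on the \emph{source} side of the differentials: it computes $\partial[\varphi]=[X\bs V]-[X\bs U]$ for a birational automorphism $\varphi$ (Lemma~\ref{lem:1st-partial-comp}), shows by induction via Lemma~\ref{lem:ll<=>diffs} that $d_r[\varphi]=0$ for all $r$ if and only if $\varphi$ extends to a piecewise automorphism, checks separately (Remark~\ref{rem:z20}) that the $\Z/2$ summands of $E^1_{1,q}$ carry no differentials, and then invokes the equivalence between injectivity of all $\psi_n$ and extendability of all birational automorphisms. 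You instead argue entirely on the \emph{target} side: convergence gives $E^\infty_{0,n}\cong \mathrm{im}\,\psi_n/\mathrm{im}\,\psi_{n-1}$, no differentials leave column $0$, so the edge map $E^1_{0,n}\to E^\infty_{0,n}$ has nonzero kernel for some $n$ exactly when some differential out of column $1$ is nonzero, and your Step 2 correctly translates that kernel condition into nonvanishing of some $\ker\psi_n$. Your argument is more formal and shorter, requires no $\pi_1$ computations and no appeal to the Gromov--Larsen--Lunts reformulation of Question~\ref{q:ll}; what it does not yield is the finer content of Theorem~\ref{thm:ll<=>diffs}, namely a page-by-page obstruction attached to an individual $\varphi$, which the paper wants for its own sake. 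Two minor points of hygiene: in Step 2 write ``the image of $\pi_0K(\V^{(n-1)})$ in $\pi_0K(\V^{(n)})$'' rather than ``$\pi_0K(\V^{(n-1)})$,'' since that map is not known to be injective (your argument is unaffected), and the identification $E^\infty_{0,n}\cong \mathrm{im}\,\psi_n/\mathrm{im}\,\psi_{n-1}$ should explicitly cite the convergence statement, Lemma~\ref{lem:ss-conv}.
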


This theorem holds over all fields $k$.  To prove this we show that the spectral
sequence constructed above produces an obstruction theory for birational
automorphisms extending to piecewise isomorphisms, therefore answering the
question of Gromov in \cite[p121]{gromov99}; this is proved in more detail in
Theorem~\ref{thm:ll<=>diffs}.  At first it may seem that this does not give very
much information about $\psi_n$, since the birational automorphism groups of
varieties are very complex.  However, this repackaging allows us to see a clear
connection between the answers to Questions \ref{q:ll} and \ref{q:L}.  In order
to make these conclusions, though, we need to restrict to ``convenient'' fields
$k$.  At the moment, only fields with characteristic $0$ are known to be
convenient.  For more details, see Definition~\ref{def:convenient}.
\begin{maintheorem} \lbl{thm:zero=>notinj} Suppose that $k$ is a convenient
  field.  If $\LL$ is a zero divisor in $K_0[\V_k]$ then $\psi_n$ is not
  injective for some $n$.
\end{maintheorem}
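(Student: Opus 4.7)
The plan is to use Theorem~\ref{thm:diff=ker}: it suffices to exhibit, under the hypothesis that $\LL$ is a zero divisor, a nonzero differential between the first and zeroth columns of the spectral sequence of Theorem~\ref{thm:graded}.

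First, I would set up a cofiber sequence
\[ K(\V_k) \xrightarrow{\LL\cdot} K(\V_k) \rto K(\V_k)/\LL \]
arising from the endofunctor $-\times \A^1$ on $\V_k$. Since this endofunctor raises dimension by one, multiplication by $\LL$ is a map of dimension-filtered spectra shifting the filtration degree by one, and the long exact sequence on $\pi_*$ yields a boundary map
\[ \partial \colon \pi_1(K(\V_k)/\LL) \rto K_0[\V_k] \]
whose image is exactly $\ker(\LL\cdot)$. If $\LL$ is a zero divisor, there is a nonzero $\alpha = \partial\tilde\alpha$ with $\tilde\alpha \in \pi_1(K(\V_k)/\LL)$.

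Next, I would invoke the convenient hypothesis on $k$ together with the spectrum-level Larsen--Lunts computation (Theorem~\ref{thm:ll}) to give a tractable model for $K(\V_k)/\LL$ and its induced dimension-filtration spectral sequence. The role of convenience is to guarantee that $\pi_1(K(\V_k)/\LL)$ is detected on the first column of its own spectral sequence; concretely, $\tilde\alpha$ should be represented by some birational automorphism class lying in a summand $\pi_1^s B\Aut_k k(X)$ for an explicit variety $X$. Then by naturality of the filtered cofiber sequence under the dimension filtration, the boundary map $\partial$ descends, on the level of spectral sequences, to a differential from the first column to the zeroth column of the $K(\V_k)$ spectral sequence, hitting the class of $\alpha$. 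This is a nontrivial differential of the type considered in Theorem~\ref{thm:diff=ker}, so that theorem yields the conclusion that $\psi_n$ is not injective for some $n$.

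The main obstacle is the middle step: showing that $\tilde\alpha$ is actually detected on the first column of the $K(\V_k)/\LL$ spectral sequence, and that the identification of $\partial$ with a differential is compatible with the naturality of the dimension filtration under multiplication by $\LL$. This is where the convenient hypothesis is essential: it supplies enough structural control over $K(\V_k)/\LL$---ultimately rooted in the Larsen--Lunts style identification of the quotient with a free object on stable birational equivalence classes---to prevent $\tilde\alpha$ from being born on a column strictly higher than the first, which would otherwise allow the desired obstruction to dissolve into higher filtration.
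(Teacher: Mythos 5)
Your overall strategy is, up to taking the contrapositive, the strategy of the paper: the paper assumes every $\psi_n$ is injective, deduces from Theorem~\ref{thm:ll<=>diffs} that all differentials out of the first column of the spectral sequence for $K(\V)$ vanish, and then shows that $K_1(\V)\rto K_1(\V/L)$ is surjective, so that $\ker(\cdot\LL)\cong\coker\big(K_1(\V)\rto K_1(\V/L)\big)=0$. You run this forwards, which is fine in principle. However, the step you yourself flag as the main obstacle is where the proposal, as written, goes wrong, and the fix is not the one you suggest.

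The boundary map $\partial\colon\pi_1(K(\V)/\LL)\rto K_0(\V)$ of the cofiber sequence for $\cdot\LL$ does not ``descend to a differential'' of the dimension-filtration spectral sequence for $K(\V)$. These are two different boundary maps (the horizontal and vertical directions of Diagram~\ref{diag:filtspec}), and the differential supported by a birational automorphism class $[\varphi]\in E^1_{1,n}$ lands in a subquotient of $K_0(\V^{(n-r,n-r-1)})$ and is computed by $[X\bs V]-[X\bs U]$ (Lemma~\ref{lem:1st-partial-comp}); it does not ``hit the class of $\alpha$'' for $\alpha\in\ker(\cdot\LL)$. What actually closes the argument is a comparison of the two spectral sequences $E^*_{p,q}\rto\tilde E^*_{p,q}$ induced by $K(\V)\rto K(\V/L)$ (Proposition~\ref{prop:ssmap}). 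Convenience enters through Lemma~\ref{lem:nice-dcomp} --- not through Theorem~\ref{thm:ll}, which is a statement about $\pi_0$ of the cofiber and is itself a consequence of that lemma: the lemma shows that every permanent cycle in $\tilde E^1_{1,n}$ lies in the part hit by classes of birational automorphisms from $E^1_{1,n}$ (Corollary~\ref{cor:1-surj}), because the summands $\tilde\Z^{\oplus\ell^{-1}(\beta)}$ support nonzero differentials and die. If all differentials out of the first column of $E$ vanished, those lifts would be permanent cycles, the map on $E^\infty_{1,*}$ would be surjective, and hence (the filtrations being exhaustive and bounded below) $K_1(\V)\rto K_1(\V/L)$ would be surjective, contradicting the existence of your $\tilde\alpha$ with $\partial\tilde\alpha=\alpha\neq 0$. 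So the nonzero differential is forced indirectly by this surjectivity argument rather than obtained by identifying $\partial$ with a differential; your write-up needs that comparison step made explicit, and the appeal to Theorem~\ref{thm:ll} replaced by Lemma~\ref{lem:nice-dcomp} and Corollary~\ref{cor:1-surj}.
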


Moreover, we can say something about the kernel of multiplication by $\LL$:
\begin{maintheorem} \lbl{thm:kernelL} Suppose that $k$ is a convenient field.
  If $\chi$ is in the kernel of multiplication by $\LL$ then we can represent
  $\chi$ as $[X]-[Y]$ where $[X \times \A^1] = [Y \times \A^1]$ but $X\times
  \A^1$ and $Y\times \A^1$ are not piecewise isomorphic.
\end{maintheorem}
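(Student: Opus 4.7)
The plan is to upgrade Theorem~\ref{thm:zero=>notinj} from an existence statement to the explicit representation claimed here, by tracking the class $\chi$ itself through the spectral sequence of Theorem~\ref{thm:graded} rather than only inferring the existence of some witness to the non-injectivity of $\psi_n$.

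First I would fix any initial representation $\chi = [X_0] - [Y_0]$ and let $n$ be an integer bounding the dimensions of $X_0$ and $Y_0$. Then $\LL\chi = [X_0 \times \A^1] - [Y_0 \times \A^1]$ is a formal sum of varieties of dimension at most $n+1$ that vanishes in $K_0[\V_k]$, so it lies in the kernel of $\psi_{n+1}$. By Theorem~\ref{thm:diff=ker}, this kernel class corresponds to a nontrivial differential from the first column to the zeroth column of the spectral sequence. Crucially, because multiplication by $\LL$ shifts the dimension filtration by exactly one, the source of this differential is canonically the lift of $\chi$ itself to the first column at filtration $n+1$, not some unrelated class; this is the refinement of Theorem~\ref{thm:zero=>notinj} needed here.

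Next I would interpret this differential geometrically. By Theorem~\ref{thm:graded}, the first column at filtration $n+1$ is a wedge of classifying spaces $\Sigma^\infty B\Aut_k k(Z)_+$, so the differential corresponds to a birational self-map of some variety $Z$ of dimension $n+1$ that fails to extend to a piecewise isomorphism. The equality $[X_0 \times \A^1] = [Y_0 \times \A^1]$ in $K_0[\V_k]$ is recorded by two stratifications of $Z$ exhibiting a birational matching between $X_0 \times \A^1$ and $Y_0 \times \A^1$. After absorbing the lower-dimensional pieces appearing in these stratifications into the representatives --- which is where convenience of $k$ enters, since this requires sufficiently controlled birational models --- I obtain new varieties $X$ and $Y$ with $[X] - [Y] = \chi$ such that $X \times \A^1$ and $Y \times \A^1$ are related by precisely the non-extendable birational map singled out by the differential. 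If they were in fact piecewise isomorphic, this differential would be forced to vanish, contradicting Theorem~\ref{thm:diff=ker}; hence the required non-piecewise-isomorphism is automatic.

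The main obstacle is the middle step: converting the abstract spectral-sequence differential into concrete varieties $X$ and $Y$ and identifying the birational map produced by the spectral sequence with the one witnessing $[X \times \A^1] = [Y \times \A^1]$. This is the same sort of geometric-realization argument underlying Theorem~\ref{thm:zero=>notinj}, but carried out class-by-class rather than in aggregate. The convenience hypothesis on $k$ will be used both to perform the birational modifications needed to pass from $(X_0, Y_0)$ to $(X, Y)$ and to guarantee that the homotopical cofiber-sequence data defining $K(\V_k)$ genuinely comes from scissor relations among varieties.
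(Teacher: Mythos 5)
There is a genuine gap, and it sits at the very first step. You assert that for an \emph{arbitrary} initial representation $\chi=[X_0]-[Y_0]$ the class $\LL\chi=[X_0\times\A^1]-[Y_0\times\A^1]$ lies in the kernel of $\psi_{n+1}$. But the kernel of $\psi_{n+1}$ consists of classes that vanish in $K_0[\V_k]$ while remaining \emph{nonzero} in $K_0(\V^{(n+1)})$, and nothing guarantees the latter: $X_0\times\A^1$ and $Y_0\times\A^1$ may simply be piecewise isomorphic, in which case $\LL\chi$ is already zero in the truncated group, there is no nontrivial differential attached to it, and your construction produces nothing. Finding representatives for which this nonvanishing holds is precisely the content of the theorem, so the argument is circular as stated. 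The subsequent claim that the source of the relevant differential is ``canonically the lift of $\chi$ to the first column'' is also unsupported: the first column at filtration $q$ is built from birational automorphism groups of $q$-dimensional varieties (Theorem~\ref{thm:graded}), and a $\pi_0$-class has no canonical lift there; moreover the differential $d_r$ drops filtration by $r$, so even identifying the filtration level of the source requires an argument.

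The paper's proof supplies exactly the missing mechanism, and it lives in the spectral sequence for the \emph{cofiber} $K(\V/L)$, which your proposal never uses. Since $\ker(\cdot\LL)=\mathrm{im}\bigl(\partial:K_1(\V/L)\to K_0(\V)\bigr)$, one chooses a preimage $x$ of $\chi$ of \emph{minimal} filtration degree $n$, writes $\partial x=[X]-[Y]$ in $K_0(\V^{(n-1)})$ with $X,Y$ of minimal dimension, and then uses Corollary~\ref{cor:1-surj} (surjectivity of $E^1_{1,*}\to\tilde E^1_{1,*}$ on permanent cycles) to show $\dim X<n-1$, and the minimality of $n$ to show that $\LL([X]-[Y])$ is nonzero in $K_0(\V^{(n-1)})$ yet dies in $K_0(\V^{(n)})$. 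The non-piecewise-isomorphism of $X\times\A^1$ and $Y\times\A^1$ then follows because a piecewise isomorphism would force equality already in $K_0(\V^{(n-1)})$. These two minimality choices, and the interplay between the two spectral sequences that makes them bite, are what your sketch would need to supply; without them the ``absorption of lower-dimensional pieces'' you defer to the convenience hypothesis has no concrete content.
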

These theorems tell us that Borisov's beautiful coincidence was not a
coincidence at all: any element in the kernel of multiplication by $\LL$ produces
an element in the kernel of $\psi_n$.

The proofs of Theorems \ref{thm:zero=>notinj} and \ref{thm:kernelL} are based on
the following observation.  We have a map of spectra $\cdot \LL: K(\V_k) \rto
K(\V_k)$ induced by multiplication by $\LL$.  If we can construct the cofiber $C$
of this map then we get an exact sequence
\[K_1(\V_k) \rto^{pr} \pi_1C \rto \pi_0K(\V_k) \rto^{\cdot \LL} \pi_0K(\V_k) \rto
\pi_0C.\] The map $\cdot \LL$ is exactly multiplication by $\LL$ on $K_0[\V_k]$.
Since this sequence is exact, the kernel of this map is the same as the cokernel
of the map $pr$, and it turns out that this cokernel is much simpler to study
than the kernel of $\LL$.  This approach also produces an alternate proof of the
following result, which was originally proved in \cite[Theorem
2.3]{larsenlunts03}:
\begin{maintheorem} \lbl{thm:ll} There is an isomorphism of abelian groups
  $K_0[\V_{\mathbf{C}}]/(\LL) \rto \Z[SB]$, where $SB$ is the set of stable
  birational isomorphism classes of varieties.
\end{maintheorem}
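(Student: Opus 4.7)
The plan is to construct an explicit isomorphism $\bar\Phi:\pi_0 C \to \Z[SB]$, where $C = \hocofib(\cdot\LL)$, so that $\pi_0 C = K_0[\V_\mathbf{C}]/(\LL)$. Since $\cdot\LL$ shifts the dimension filtration by one, it lifts to a map of filtered spectra, and $C$ inherits a filtration whose $n$-th graded piece is identified with the cofiber
\[\bigvee_{[X]\in B_{n-1}} \Sigma^\infty B\Aut_\mathbf{C} k(X)_+ \rto \bigvee_{[Y]\in B_n}\Sigma^\infty B\Aut_\mathbf{C} k(Y)_+,\]
where the map sends the summand indexed by $[X]$ to the one indexed by $[X\times\A^1]$ via the inclusion $\Aut_\mathbf{C} k(X)\hookrightarrow \Aut_\mathbf{C} k(X)(t)$ that fixes the new coordinate. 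Taking $\pi_0$ and using the long exact sequence of the cofiber yields $\pi_0$ of the $n$-th graded piece of $C$ equal to $\Z[B_n]/\Z[B_{n-1}\cdot\A^1]$, which is $E^1_{0,n}$ of the spectral sequence computing $\pi_* C$.

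In parallel I would define $\Phi: K_0[\V_\mathbf{C}] \to \Z[SB]$ using resolution of singularities over $\mathbf{C}$: every class can be written as a $\Z$-linear combination of classes of smooth projective varieties, and the assignment $[X]\mapsto [X]^{sb}$ for smooth projective $X$ extends additively. Since $\LL = [\P^1]-[\mathrm{pt}]$ and $\P^1$ is stably rational, $\Phi(\LL)=0$, so $\Phi$ descends to $\bar\Phi:\pi_0 C\to \Z[SB]$. Surjectivity of $\bar\Phi$ is immediate from the definition.

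For injectivity I would filter $\Z[SB]$ by the minimum dimension of a representative, so that $\bar\Phi$ is filtration-preserving, and prove injectivity on the $n$-th filtered piece by induction on $n$. The induced map on the $n$-th graded piece factors through a surjection $\Z[B_n]/\Z[B_{n-1}\cdot\A^1]\twoheadrightarrow \mathrm{gr}^n\Z[SB]$, and its kernel is generated by differences $[X]-[Y]$ where $X,Y$ are smooth projective of dimension $n$, stably birational but not birational. Here is where the geometry enters: over $\mathbf{C}$ the Weak Factorization Theorem yields $m$ for which $X\times \P^m$ and $Y\times\P^m$ are connected by a chain of blow-ups along smooth centers, and the blow-up formula $[\mathrm{Bl}_ZW]-[W]=[Z](\LL+\cdots+\LL^{c-1})\in(\LL)$ combined with $[\P^m]\equiv 1\pmod\LL$ forces $[X]=[Y]$ already in $\pi_0 C$. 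Consequently $[X]-[Y]$ vanishes in the $n$-th graded piece of $\pi_0 C$, so $\bar\Phi$ is injective on each graded piece.

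The main obstacle will be the careful identification of the filtration on $\pi_0 C$ inherited from the filtered spectrum $C$ with the natural dimension filtration on $K_0[\V_\mathbf{C}]/(\LL)$, so that injectivity on graded pieces genuinely upgrades to injectivity on each filtered piece; once this is in hand, passing to the colimit over $n$ yields the desired isomorphism $\bar\Phi:\pi_0 C\xrightarrow{\cong}\Z[SB]$.
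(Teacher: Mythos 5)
Your route differs from the paper's: you first build the Larsen--Lunts homomorphism $\Phi\colon K_0[\V_{\mathbf{C}}]\to \Z[SB]$ and then use the dimension filtration on the cofiber only to check injectivity, whereas the paper never constructs $\Phi$ at all --- it computes $K_0(\V/L)$ as an abstract group, $\bigoplus_n \Z^{\oplus (B_n/\sim_\infty)\smallsetminus \ell(B_{n-1})}$, via Lemma~\ref{lem:nice-dcomp}, and then matches the surviving generators bijectively with stable birational classes. The problem is that the step you dispose of in one sentence --- ``the assignment $[X]\mapsto [X]^{sb}$ for smooth projective $X$ extends additively'' --- is the hard part of Larsen--Lunts. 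A class in $K_0[\V_{\mathbf{C}}]$ has many expressions as a $\Z$-linear combination of smooth projective classes, and you must show the resulting element of $\Z[SB]$ is independent of the expression, i.e.\ that the assignment respects the scissor relations $[X]=[Y]+[X\smallsetminus Y]$. This requires either Bittner's presentation of $K_0[\V_{\mathbf{C}}]$ or Larsen--Lunts's own inductive construction, both resting on resolution of singularities \emph{and} weak factorization. In your argument $\Phi$ is precisely what separates distinct stable birational classes (the analogue of the paper's nonvanishing statement Lemma~\ref{lem:nice-dcomp}(5)), so asserting its existence is not a citable convenience but the removal of the key input; and in the context of a paper whose stated aim is a new proof of Larsen--Lunts, importing their main construction is close to circular.

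Two smaller points. First, your description of the kernel of $E^1_{0,n}=\Z^{\oplus B_n}/\ell(\Z^{\oplus B_{n-1}})\to \mathrm{gr}^n\Z[SB]$ as generated by differences $[X]-[Y]$ of stably birational classes of the \emph{same} dimension $n$ is incomplete: a single class $[X]$ with $X$ of dimension $n$ stably birational to a variety of strictly smaller dimension (such classes exist, e.g.\ stably rational non-rational threefolds, and need not lie in $\ell(B_{n-1})$) also maps to $0$ in $\mathrm{gr}^n\Z[SB]$. The same weak-factorization computation shows such an $[X]$ lands in filtration $\leq n-1$ of $\pi_0C$, so the argument survives, but the case must be addressed. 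Second, granting $\Phi$, your filtration-and-graded-pieces induction is correct but unnecessary: the inverse $\Z[SB]\to K_0[\V_{\mathbf{C}}]/(\LL)$, $[X]^{sb}\mapsto [X]$, is well defined by exactly the weak-factorization computation you perform, and both composites are visibly the identity, which gives the isomorphism in two lines with no spectral sequence. This is a symptom of the fact that once $\Phi$ is assumed, the homotopy-theoretic content of the theorem has already been assumed away.
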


\begin{remark}
  The theorem of Larsen and Lunts actually claims that this is an isomorphism of
  rings.  The techniques explained in this paper should prove that statement as
  well, since the spectral sequence comes with a multiplicative structure.  The
  details of the multiplicative structure are deferred until future work.
\end{remark}

As an amusing corollary of our techniques, we also give an extension of Liu and
Sebag's result \cite[Corollary 5(1)]{liusebag} that if $X$ and $Y$ are smooth
projective varieties and $[X] = [Y]$ in $K_0[\V_k]$ then $X$ and $Y$ are stably
birational; this is proved in Corollary~\ref{cor:ls}.  In our extension we show
that if two varieties are the smallest-dimensional representatives of their
stable birational isomorphism classes then if they are equal in the Grothendieck
ring then they are stably birational, regardless of whether they are smooth or
projective.  Note that this is not simply a corollary of Theorem~\ref{thm:ll},
as the homomorphism in Theorem~\ref{thm:ll} only takes smooth projective
varieties to their stable birational isomorphism classes; the image of the
homomorphism on a general variety is not given simply by its stable birational
isomorphism class.

This paper is organized as follows.  In Section~\ref{sec:assemblers} we
introduce the technical machinery of assemblers and their $K$-theory, which are
the tools necessary to define $K(\V_k)$ and prove Theorem~\ref{thm:graded}.  In
Section~\ref{sec:ss} we give a brief review of the facts about spectral
sequences that we need.  Section~\ref{sec:1stss} concludes the analysis of
the filtration on $K(\V_k)$ and proves Theorem~\ref{thm:diff=ker}.
Section~\ref{sec:2ndss} analyzes the cofiber of multiplication by $\LL$
in general fields.  Finally, Section~\ref{sec:convenient} restricts to
convenient fields and proves Theorems \ref{thm:zero=>notinj}, \ref{thm:kernelL}
and \ref{thm:ll}.

\subsection*{Acknowledgements} The author would like to thank Dan Grayson, Mike
Hopkins, Peter May, Matthew Morrow, Madhav Nori, Aaron Silberstein and Chuck
Weibel for helpful discussions of this material, and Mike Hill for a very
illuminating discussion during the JMM.  The author would also like to thank the
Institute for Advanced Study and the University of Chicago, as well as the NSF
MSRFP grant for financial support.

\subsection*{Notation} We write $\Sp$ for the category of spectra.  In this
paper, we take as our model for this category the category of symmetric spectra
of simplicial sets.  When a model structure is required, we take the stable
structure with the levelwise cofibrations; for more detail, see
Remark~\ref{rem:inc-cofib}.  All homotopy groups are stable homotopy groups; in
forthcoming sections we drop the $s$ superscript.  For a topological space
$X$ we write $\Sigma_+^\infty X$ for the spectrum $\Sigma^\infty(X_+)$.

For any set $S$, we write
$\Z^{\oplus S} \defeq \bigoplus_{s\in S} \Z$.  We also use the highly
nonstandard notation
\[\tilde\Z^{\oplus S} \defeq
\begin{cases}
  \ker (\Z^{\oplus S} \rto^+ \Z) & \hbox{if } S \neq \eset,
  \\
  \Z/2 & \hbox{if } S = \eset.
\end{cases}\] We write $\uplus$ to refer to a union which is disjoint as sets.

\section{Introducing assemblers} \lbl{sec:assemblers}

Assemblers are introduced in \cite{Z-Kth-ass}.  In this paper we quote the
definition of an assembler and as few theorems as is necessary to facilitate
understanding of the main results; for a more detailed exposition, see
\cite{Z-Kth-ass}.

\begin{definition}
  An \textsl{assembler} is a small Grothendieck site which satisfies the
  following extra axioms.
  \begin{itemize}
  \item[(I)] $\C$ has an initial object $\initial$ and the empty family is a
    covering family of $\initial$.
  \item[(R)] For any object $A$ in $\C$, any two finite disjoint covering
    families of $A$ have a common refinement which is itself a finite disjoint
    covering family.
  \item[(M)] All morphisms in $\C$ are monomorphisms.
  \end{itemize}

  We say that a family of morphisms $\{f_i:A_i \rto A\}_{i\in I}$ in an
  assembler $\C$ is \textsl{disjoint} if for all $i\neq j\in I$, the pullback of
  \[A_i \rto^{f_i} A \lto^{f_j} A_j\]
  exists and is equal to $\initial$.  When $\C$ has all pullbacks, (R) holds
  automatically.
  
  The category of assemblers, denoted $\Asb$, has
  \begin{description}
  \item[objects] assemblers $\C$, and
  \item[morphisms] functors $\C \rto \D$ which are continuous with respect to
    the Grothendieck topology, preserve the initial object, and which take
    disjoint families to disjoint families.
  \end{description}
\end{definition}

The fundamental result about the category of assemblers is the following.

\begin{theorem} \lbl{thm:mainass}
  There is a functor $K:\Asb \rto \Sp$ with the following properties:
  \begin{enumerate}
  \item The group $\pi_0 K(\C)$ is the free abelian group on objects of $\C$,
    under the relations that for every finite disjoint covering family $\{f_i:
    A_i \rto A\}_{i\in I}$ in $\C$,
    \[[A] = \sum_{i\in I} [A_i].\]
  \item Every element of $\pi_1K(\C)$ can be represented by the following data:
    \begin{itemize}
    \item a pair of finite tuples $\{A_i\}_{i\in I}$, $\{B_j\}_{j\in J}$ of
      objects in $\C$, and
    \item for $\epsilon=\pm 1$, a map of finite sets $f_\epsilon: I \rto J$
      and for all $i\in I$, morphisms $f_{\epsilon i}:A_i \rto
      B_{f_\epsilon(i)}$ such that for each $j\in J$ the family
      $\{f_{\epsilon i}: A_i \rto B_j\}_{i\in f_\epsilon^{-1}(j)}$
      is a covering family.
    \end{itemize}
  \end{enumerate}
\end{theorem}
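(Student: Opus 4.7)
The plan is to construct $K(\C)$ via the Segal $\Gamma$-space machinery applied to a category assembled out of the covering-family structure of $\C$, and then read off $\pi_0$ and $\pi_1$ directly from the construction.

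First I would associate to each finite pointed set $\langle n\rangle_+ = \{0,1,\ldots,n\}$ a category $\mathcal{W}_n(\C)$ whose objects are assignments $S \mapsto A_S$ for $S \subseteq \{1,\ldots,n\}$, together with the datum that $\{A_{\{i\}} \rto A_S\}_{i \in S}$ is a finite disjoint covering family of $A_S$; morphisms are compatible families of morphisms in $\C$ between such data. The assignment $\langle n\rangle_+ \mapsto \mathcal{W}_n(\C)$ gives a $\Gamma$-category whose Segal-machine output is the proposed spectrum $K(\C)$. The axioms on morphisms of assemblers (continuity, preservation of $\initial$, preservation of disjoint families) guarantee this is functorial, yielding $K:\Asb \rto \Sp$.

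For part (1), the Segal construction identifies $\pi_0 K(\C)$ with the group completion of the commutative monoid $\pi_0\mathcal{W}_1(\C)$. Two objects $A, A'$ lie in the same component exactly when they are connected by a zig-zag of covering refinements; axiom (R) lets one take common refinements of competing covers, axiom (I) interprets the empty cover of $\initial$ as $[\initial]=0$, and axiom (M) ensures morphisms are well-behaved. Unwinding, the relations that survive are exactly $[A] = \sum_i [A_i]$ for each finite disjoint covering family, giving the description in (1).

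For part (2), I would use the general fact that $\pi_1$ of a $\Gamma$-space equals $\pi_1$ of the classifying space of the group completion of its underlying monoidal category. A loop in that classifying space is represented by a zig-zag of refinement morphisms starting and ending at a common tuple; the claim is that any such zig-zag can be rectified to exactly the two-step form in the statement. The data of a pair of tuples $\{A_i\}_{i \in I}$, $\{B_j\}_{j\in J}$ with maps $f_\epsilon:I\rto J$ and morphisms $f_{\epsilon i}:A_i \rto B_{f_\epsilon(i)}$ whose fibers over each $j$ form covering families is precisely a parallel pair of morphisms in $\mathcal{W}_1(\C)$ between a common source tuple $\{A_i\}$ and target tuple $\{B_j\}$, and such a parallel pair is a loop after group completion.

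The main obstacle is this rectification in (2): a priori a class in $\pi_1$ requires an arbitrarily long zig-zag of refinements to represent it, not a single parallel pair. The key input is axiom (R), applied iteratively: given two competing covers of a common tuple, one takes their common refinement, absorbing it into the endpoints, and inductively collapses longer zig-zags into the compact two-step form. This is the technical heart of the K-theory construction worked out in \cite{Z-Kth-ass}, and it is what makes the clean presentation in (2) possible.
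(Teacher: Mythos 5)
First, note that the paper does not actually prove Theorem~\ref{thm:mainass}: it is quoted from \cite{Z-Kth-ass} (the construction of $K$ and part (1)) and from \cite[Corollary 3.10]{Z-ass-pi1} (part (2)), so the comparison is with the construction in those references. Your overall strategy --- run the Segal machine on a category encoding objects of $\C$ together with covering-family data, read off $\pi_0$ as a group completion, and obtain part (2) by rectifying zig-zags of refinements into a single parallel pair using axiom (R) --- is the right one and is essentially what is done there.

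However, the specific $\Gamma$-category you propose does not work. Your $\mathcal{W}_n(\C)$ asks for a single object $A_S$ of $\C$ for each subset $S$, disjointly covered by the $A_{\{i\}}$ with $i\in S$; for the Segal maps $\mathcal{W}_n(\C) \rto \mathcal{W}_1(\C)^{\times n}$ to be equivalences, every finite family of objects of $\C$ would have to admit an (essentially unique) object containing it as a disjoint covering family. Assemblers need not have such internal ``disjoint unions'': in $\S_G$ the only non-initial object is $*$, and no object is disjointly covered by two copies of $*$, so the Segal map $\mathcal{W}_2(\S_G) \rto \mathcal{W}_1(\S_G)^{\times 2}$ cannot hit $(*,*)$. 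Consequently your $\Gamma$-space is not special, $\pi_0\mathcal{W}_1(\C)$ carries no monoid structure to group-complete, and the construction cannot recover $K(\S_G)\simeq \Sigma_+^\infty BG$. The fix, and what \cite{Z-Kth-ass} actually does, is to take as objects \emph{formal} finite tuples $\{A_i\}_{i\in I}$ of non-initial objects, with morphisms exactly the data appearing in part (2) (a map of index sets whose fibers are finite disjoint covering families), and to obtain the $\Gamma$-structure from the wedges $\C^{\vee n}$ rather than from internal sums; the Segal maps are then equivalences for formal reasons, since a tuple of objects of $\C^{\vee n}$ splits canonically by wedge summand. With that correction part (1) goes through as you describe, and part (2) is the content of \cite[Corollary 3.10]{Z-ass-pi1}: your identification of axiom (R) as the engine of the zig-zag rectification is correct, but the rectification itself, together with the verification that group completion does not create new $\pi_1$ classes, is the genuinely technical step and is not supplied by your sketch.
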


For conciseness we write $K_i(\C)$ for $\pi_i K(\C)$.

We purposefully leave the relations between generators in part (2) of the
theorem imprecise, as this is the most general statement we need in this paper.
For a more precise statement see \cite[Corollary 3.10]{Z-ass-pi1}.

The main example of an assembler that we are interested in in this paper is the
assembler of varieties.

\begin{example}
  Let $k$ be a field.  The assembler $\V_k$ has as objects varieties over $k$;
  here by ``variety'' we mean a reduced separated scheme of finite type over
  $k$.  The morphisms in $\V_k$ are locally closed embeddings.  The Grothendieck
  topology is generated by the coverage $\{Y \rcofib X, X\bs Y\rcofib X\}$ for
  $Y$ a closed subvariety of $X$.  (For background on coverages, see for example
  \cite{johnstone02v2}.)

  When $k$ is clear from context we write $\V$ instead of $\V_k$.
\end{example}

\begin{example}
  Let $\FinSet$ be the category of finite sets and injective morphisms.  Then
  this is an assembler if we define a family of morphisms $\{f_i:A_i \rto
  A\}_{i\in I}$ to be a covering family if $\bigcup_{i\in I} f_i(A_i) = A$.
  Then $K_0(\FinSet) \cong \Z$, with the set $\{1,\ldots,n\}$ representing $n\in
  \Z$.

  If $k$ is a finite field then the functor $X\rgoesto X(k)$ gives a morphism of
  assemblers $\V \rto \FinSet$, and thus a map of spectra $K(\V) \rto
  K(\FinSet)$.  On $K_0$ this is just point counting over $k$.  By the
  Barratt--Priddy--Quillen Theorem, $K(\FinSet) \simeq \S$, the sphere spectrum,
  so we see that point counting lifts to a map of spectra
  $K(\V) \rto \S$.
\end{example}

There are two other important types of assemblers that arise in our
analysis of $K(\V)$.

\begin{example}
  Let $G$ be a discrete group.  The assembler $\S_G$ has two objects $\initial$
  and $*$, with one morphism $\initial \rto *$ and $\Aut(*) \cong G$.  Then
  $K(\S_G) \simeq \Sigma_+^\infty BG$; in particular, when $G$ is trivial $\S_G$
  is weakly equivalent to the sphere spectrum.
\end{example}

\begin{example}
  Suppose that $\C$ and $\D$ are assemblers.  Then we can construct an assembler
  $\C \vee \D$ whose underlying category is the union of the categories $\C$ and
  $\D$ with the two initial objects glued together.  The Grothendieck topology
  on $\C\vee\D$ is inherited from the topologies on $\C$ and $\D$.
\end{example}

The assembler $\V$ comes with an associated filtration induced by the
dimension of the corresponding varieties.  More concretely, let $\V^{(n)}$ be
the full subcategory of varieties of dimension at most $n$; it has the structure
of an assembler induced from that of $\V$, and the inclusion is a morphism of
assemblers.  Thus we get a diagram
\[\V^{(0)} \rto \V^{(1)} \rto \V^{(2)} \rto \cdots \rto
V^{(n)} \rto \cdots \rto \V.\]
Applying $K$ to this sequence, we get a sequence of morphisms of spectra
\[K(\V^{(0)}) \rto K(\V^{(1)}) \rto K(\V^{(2)}) \rto \cdots \rto
K(V^{(n)}) \rto \cdots \rto K(\V),\] such that $K(\V)$ is the colimit of
this diagram; thus $K(\V)$ is a filtered spectrum.

\begin{remark} \lbl{rem:inc-cofib} In order for some of the later techniques
  used in the paper to work, we need the morphisms of spectra to be cofibrations
  in some stable model structure on $\Sp$.  In order to accomplish this, we take
  as our model of the stable homotopy category the category of symmetric spectra
  of simplicial sets with the stable model structure.  (See
  \cite[Theorem9.2]{mmss}.)  In this structure, the weak equivalences are the
  stable equivalences and the cofibrations are the level cofibrations.  The
  construction of $K$ defined in \cite[Definition 1.9]{Z-Kth-ass} gives the
  desired properties.
\end{remark}

We now want to compute the spectral sequence induced by this filtered spectrum.
In order to do this, we must first compute $\pi_q(K(\V)^{(p)}, K(\V)^{(p-1)})$
for all $q$ and $p$.  We will not give the full details of the proof; instead,
we explain how to use the machinery developed in \cite{Z-Kth-ass} to obtain
these results.

We write $\V^{(n,n-1)}$ for the assembler whose underlying category is the
full subcategory of $\V$ consisting of varieties of dimension exactly $n$ and
the empty variety.  For a variety $X$, a family $\{f_i:X_i \rto X\}$ is a
covering family if it can be completed to a disjoint covering family inside
$\V^{(n)}$ by morphisms whose domains have dimension at most $n-1$.  Thus for an
irreducible variety $X$, a family $\{f_i:X_i \rto X\}_{i\in I}$ is a covering
family if and only if $X_i \neq \initial$ for a unique $i\in I$.

\begin{theorem} \lbl{thm:cofib}
  The cofiber of the map $K(\V^{(n-1)}) \rcofib K(\V^{(n)})$ is
  $K(\V^{(n,n-1)})$.  
\end{theorem}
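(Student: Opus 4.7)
The plan is to build an explicit morphism of assemblers $q: \V^{(n)} \to \V^{(n,n-1)}$ whose ``kernel'' is precisely $\V^{(n-1)}$, and then apply the general cofiber-sequence theorem for assemblers from \cite{Z-Kth-ass}. On objects, $q$ is the identity on varieties of dimension exactly $n$ and sends every variety of dimension strictly less than $n$ to the initial object $\initial$ of $\V^{(n,n-1)}$; on morphisms, a locally closed embedding $Y \hookrightarrow X$ is preserved when $\dim Y = \dim X = n$, and otherwise becomes the unique arrow $\initial \to q(X)$. The composition $\V^{(n-1)} \hookrightarrow \V^{(n)} \xrightarrow{q} \V^{(n,n-1)}$ then factors through $\initial$, so on $K$-theory $K(q) \circ K(i)$ is null-homotopic.

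The first substantive check is that $q$ is a morphism of assemblers. Preservation of the initial object is immediate, and disjoint locally closed subvarieties remain disjoint after some of them are collapsed to $\initial$. For continuity, a finite disjoint covering family $\{f_i : X_i \to X\}$ in $\V^{(n)}$ with $\dim X = n$ is sent to the family consisting of those $X_i \hookrightarrow X$ with $\dim X_i = n$ together with copies of $\initial$ from the low-dimensional $X_i$; by the definition of the topology on $\V^{(n,n-1)}$, which declares such a family covering precisely because it can be completed to a disjoint covering family in $\V^{(n)}$ by morphisms with lower-dimensional domains, this is a covering family. Covering families of objects of dimension $<n$ are sent to covering families of $\initial$ by axiom (I).

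With $q$ in hand, identifying $K(\V^{(n,n-1)})$ as the cofiber reduces to an appeal to the cofiber theorem for assemblers proved in \cite{Z-Kth-ass}, whose content is roughly that if a morphism $q: \C \to \D$ collapses a full sub-assembler $\C'$ of $\C$ to $\initial$, and the topology on $\D$ is generated by the images of covering families from $\C$, then $K(\C') \to K(\C) \to K(\D)$ is a cofiber sequence of spectra. The main obstacle is matching our setup to the precise hypothesis format of that theorem---in particular verifying that the ad hoc topology specified for $\V^{(n,n-1)}$ is the quotient topology induced by $q$, i.e.\ that every covering family in $\V^{(n,n-1)}$ arises (up to refinement by axiom (R)) as the image of one from $\V^{(n)}$. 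This is essentially bookkeeping built into the definition of $\V^{(n,n-1)}$ rather than a conceptual difficulty, and once carried out the cofiber theorem applies directly to produce the desired identification.
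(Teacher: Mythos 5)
Your proposal takes essentially the same route as the paper: the paper's entire proof is the one-line observation that the statement is a special case of the quotient/cofiber theorem for assemblers (Theorem D of \cite{Z-Kth-ass}) applied to the subassembler $\V^{(n-1)} \subseteq \V^{(n)}$, whose associated quotient assembler is exactly $\V^{(n,n-1)}$ as defined. Your explicit construction of the collapse morphism $q$ and the check that the declared topology on $\V^{(n,n-1)}$ is the induced quotient topology is precisely the hypothesis-matching the paper leaves implicit, so the argument is correct.
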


This is a special case of \cite[Theorem D]{Z-Kth-ass}.  Computing
the homotopy type of $K(\V^{(n,n-1)})$ require the use of an approximation
theorem for assemblers:

\begin{theorem}[{\cite[Theorem B]{Z-Kth-ass}}] \lbl{thm:1stapprox} Suppose
  that $\D$ is a subassembler of $\C$ such that every object $A$ of $\C$ has a
  finite disjoint covering family $\{f_i:A_i \rto A\}_{i\in I}$ such that $A_i$
  is in  $\D$ for all $i$.  Then the map $K(\D) \rto K(\C)$ induced by the inclusion
  $\D \rcofib \C$ is a homotopy equivalence of spectra.
\end{theorem}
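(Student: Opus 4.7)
The plan is to produce a homotopy inverse to the inclusion $K(\D) \rto K(\C)$ at the spectrum level, using the covering hypothesis to systematically replace each object of $\C$ by a decomposition into objects of $\D$. Since $K$ is built from a simplicial (or $\Gamma$-space) model whose cells are indexed by finite tuples of pairwise-disjoint objects together with their covering relations, the task reduces to turning such a decomposition into a map of simplicial objects that respects the covering structure.

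First, for each object $A$ of $\C$ I would fix a finite disjoint covering family $\mathcal{F}(A) = \{A_i \rto A\}_{i \in I_A}$ with every $A_i$ in $\D$, using $\mathcal{F}(A) = \{A\}$ whenever $A$ is already in $\D$. Extending $\mathcal{F}$ coordinate-wise to tuples of objects provides a candidate map $K(\C) \rto K(\D)$. The key input is axiom (R): any two disjoint covering families of a given object admit a common disjoint refinement, and this refinement can itself be chosen in $\D$ by applying $\mathcal{F}$ term-by-term. So any two choices of $\mathcal{F}(A)$ are connected by a zigzag of refinements inside $\D$, and this supplies the simplicial homotopies needed for both composites $K(\D) \rto K(\C) \rto K(\D)$ and $K(\C) \rto K(\D) \rto K(\C)$ to agree with the identity up to homotopy. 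On $\pi_0$ this last step is visibly Theorem~\ref{thm:mainass}(1); on $\pi_1$ it is the content of Theorem~\ref{thm:mainass}(2), with refinements providing the covering families in the definition.

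The main obstacle is coherence: the assignment $A \mapsto \mathcal{F}(A)$ is not functorial, because pullbacks need not exist in $\C$ and a general morphism $B \rto A$ does not canonically relate $\mathcal{F}(B)$ and $\mathcal{F}(A)$. The cleanest way around this, and the one I would adopt, is to enlarge $K(\C)$ to a weakly equivalent model whose simplices record refinement data explicitly, so that no global choice of $\mathcal{F}$ is needed and the map down to $K(\D)$ is defined directly from the site structure. The approximation statement then reduces to showing that adjoining refinement data does not change the homotopy type, which is tractable because (R) guarantees that the poset of refinements of a given covering family is cofiltered. This mirrors the classical comparison lemma for Grothendieck sites, with (R) playing the role that pullback-stability of covers plays there.
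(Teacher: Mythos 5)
This theorem is not proved in the paper at all: it is imported verbatim as Theorem B of \cite{Z-Kth-ass}, so there is no internal proof to compare your attempt against. Judged on its own terms, your final strategy (third paragraph) is essentially the one used in the cited reference: one works with the categorical model of $K(\C)$ whose cells are finite tuples of pairwise-disjoint objects with morphisms given by disjoint covering refinements, and one shows that the inclusion from the corresponding model for $\D$ is a weak equivalence by a Quillen Theorem~A--style argument, where the hypothesis gives nonemptiness of the category of $\D$-covers of each object and axiom (R) gives common refinements. Your second paragraph, by contrast, does not work as a proof: choosing $\mathcal{F}(A)$ object-by-object and extending coordinate-wise does not define a simplicial map (higher simplices encode chains of refinements that $\mathcal{F}$ does not respect), and the assertion that common refinements ``supply the simplicial homotopies'' is exactly the coherence problem, not its solution. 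You correctly diagnose this and pivot, so the sketch survives, but only because the third paragraph replaces the second rather than repairing it.

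Two points where your sketch needs more than you supply. First, contractibility of the relevant comma categories requires cofilteredness of a \emph{category} of covers, not a poset: besides a common refinement of any two $\D$-covers of $A$, one must also equalize parallel pairs of refinement morphisms, and (R) as stated only gives the former; this is a real step in the reference, not a formality. Second, $K(\C)$ is a spectrum built from an iterated ($\Gamma$-space--type) construction, so the comparison must be carried out compatibly in every simplicial degree and every level of the delooping, which is why the reference proves the statement for the whole functorial construction rather than for a single simplicial set. Neither issue is fatal to your approach, but as written the proposal is an outline of the correct proof rather than a proof.
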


This is a devissage result for assemblers: if the objects in an assembler can be
covered by the objects in a subassembler, then the inclusion of assemblers
induces a homotopy equivalence on $K$-theory.  For example, let $\V^{sm}$ be the
full subcategory of $\V$ consisting of the smooth varieties.  Then the inclusion
$\V^{sm} \rto \V$ is an inclusion that satisfies the conditions of the
proposition, since any variety can be stratified by smooth varieties, and thus
the inclusion
$K(\V^{sm}) \rto K(\V)$
is a homotopy equivalence of spectra.

\begin{definition} \lbl{def:Bn}
  Let $B_n$ be the set of birational isomorphism classes of irreducible
  varieties over $k$ of dimension $n$.  For any $\alpha$ in $B_n$, define
  \[\Aut(\alpha) = \mathrm{Aut}_k k(X)\]
  for any variety $X$ in the birational isomorphism class $\alpha$.
\end{definition}

Theorem~\ref{thm:graded} follows directly from several results in
\cite{Z-Kth-ass}. Here, we give an outline of the proof by reducing of the
theorem to those results.


\begin{proof}[Proof of Theorem~\ref{thm:graded}]
  By Theorem~\ref{thm:cofib}, it suffices to show that for any $n \geq 0$,
  \[K(\V^{(n,n-1)}) \simeq \bigvee_{\alpha\in B_n} \Sigma^\infty_+ B\Aut(\alpha).\]
  Let $\tilde \V^{(n,n-1)}$ be the full subassembler of $\V^{(n,n-1)}$ of
  irreducible varieties.  By Theorem~\ref{thm:1stapprox}, 
  \[K(\tilde \V^{(n,n-1)}) \simeq K(\V^{(n,n-1)}).\]

  Pick a representative $X_\alpha$ for each equivalence class $\alpha$ in $B_n$,
  and let $\C$ be the full subassembler of $\tilde \V^{(n,n-1)}$ consisting of
  subvarieties of $X_\alpha$ for any $\alpha$.  By Theorem~\ref{thm:1stapprox},
  $K(\C) \simeq K(\V^{(n,n-1)})$.  Note that each nonempty variety in $\C$ has a
  morphism to exactly one of the $X_\alpha$, since all morphisms are inclusions
  of dense open subsets and if some $Z$ had morphisms to $X_\alpha$ and
  $X_\beta$ then $X_\alpha$ and $X_\beta$ are by definition birationally
  isomorphic,  hence $\alpha = \beta$.  We can therefore write
  \[\C \cong \bigvee_{\alpha\in B_n} \C_{X_\alpha},\]
  where $\C_{X_\alpha}$ is the full subassembler of $\C$ consisting of those
  varieties with morphisms to $X_\alpha$.  $K$-theory commutes with $\vee$, so
  it remains to analyze $K(\C_{X_\alpha})$ for all $\alpha$ in $B_n$. This is
  done in \cite[Theorem 2.1(1)]{Z-Kth-ass}, which states exactly that
  $K(\C_{X_\alpha}) = \Sigma_+^\infty B \Aut(X_\alpha)$.
\end{proof}

We also need a construction of the cofiber of a morphism of (simplicial)
assemblers, by which we mean functors $\Delta^{op} \rto \Asb$.  In general, the
cofiber of a morphism of assemblers (or a morphism of simplicial assemblers) is
a simplicial assembler, so this level of generality is necessary.  The category
of assemblers sits inside the category of simplicial assemblers as the constant
simplicial assemblers. (For more on simplicial objects, see for example
\cite[Chapter 3]{hovey99}; for a more in-depth discussion of this definition and
Theorem~\ref{thm:cofiber} see \cite[Section 4]{Z-Kth-ass}.)
\begin{definition} \lbl{def:simp-cofib}
  For any assembler, $\C$, write $\nabla: \C\vee\C \rto \C$ for the fold map
  which is given by the identity on each of the two wedge summands.

  Let $F:\C_\dot\rto \D_\dot$ be a morphism of simplicial assemblers.  The simplicial
  assembler $(\D_\dot/F)_\dot$ is defined by
  \[(\D_\dot/F)_n = \D_n \vee \bigvee_{i=1}^n \C_n.\]
  The face maps $d_i$ for $n>i>0$ are defined by the composition
  \[\C_n \vee \C_n \rto^{d_i \vee d_i} \C_{n-1} \vee \C_{n-1} \rto^\nabla \C_{n-1},\]
  on the $i$-th and $i+1$-st copies of $\C_n$, and by $d_i$ on the other wedge
  summands.  The map $d_0$ is defined by the composition
  \[\D_n \vee C_n \rto^{d_0\vee d_0} \D_{n-1} \vee \C_{n-1} \rto^{1 \vee F}
  \D_{n-1} \vee \D_{n-1} \rto^\nabla \D_{n-1}\] on the first two wedge summands
  and $d_0$ on the others, and $d_n$ is defined by mapping the $n$-th copy of
  $\C_n$ entirely to the initial object and by $d_n$ elsewhere.

  By abusing notation, we generally write $\D_\dot/F$ for the simplicial
  assembler $(\D_\dot/F)$; when $\D_\dot$ is a constant assembler $\D$ we write
  $\D/F$.

  There is a morphism of simplicial assemblers $p:\D_\dot \rto (\D/F)_\dot$
  given by including $\D_n$ into the $\D_n$ in the $n$-th level.

  In the case when $\D_\dot$ is the assembler which is trivial at each level, we
  write $\Sigma \C_\dot$ for $\D_\dot/F$.
\end{definition}

This construction is analogous to the usual bar construction for a 
cofiber of a map of simplicial objects; for more on bar constructions, see
\cite[Section 4.2]{riehl14}.  

\begin{theorem}[{\cite[Theorem C]{Z-Kth-ass}}] \lbl{thm:cofiber} 
  The sequence
  \[K(\C_\dot) \rto^{K(F)} K(\D_\dot) \rto^{K(p)} K((\D/F)_\dot)\] is a
  cofiber sequence.  The boundary morphism $K_{n}((\D/F)_\dot) \rto
  K_{n-1}(\C_\dot)$ is obtained from the morphism of simplicial assemblers
  $(\D/F)_\dot \rto \Sigma\C_\dot$ defined by sending each copy of $\D_n$ to the
  initial object.
\end{theorem}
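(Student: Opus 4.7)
The plan is to identify $(\D/F)_\dot$ with a simplicial two-sided bar construction and argue that $K$ converts this combinatorial construction into the actual homotopy cofiber in $\Sp$.

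First I would verify that Definition~\ref{def:simp-cofib} really defines a simplicial assembler. The face maps $d_0,\ldots,d_n$ on $\D_n \vee \bigvee_{i=1}^n \C_n$ involve the fold $\nabla$, the structural face maps of $\C_\dot$ and $\D_\dot$, $F$ itself, and a collapse to $\initial$. Checking the simplicial identities $d_id_j=d_{j-1}d_i$ for $i<j$ is routine bookkeeping: the cases where both face maps act within a single wedge summand follow from simpliciality of $\C_\dot$ or $\D_\dot$; the interesting identities, such as those involving $d_0$ and the $F$-insertion, work out because $F$ is a morphism of simplicial assemblers and because $\nabla$ is (co)associative and compatible with $F$ on either summand. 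The degeneracies are handled analogously.

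Next I would interpret this construction geometrically. The $n$-th level can be thought of as "$\D_n$ glued to $n$ copies of $\C_n$ parametrized by the $n$ edges of $\Delta^n$,'' with $d_0$ performing the attaching via $F$ and $d_n$ coning off the last copy; this is exactly the standard simplicial model of the mapping cone of $F$. In particular there is a filtration of $(\D/F)_\dot$ whose first stage is the image of $p:\D_\dot \rto (\D/F)_\dot$, and whose successive quotients correspond to the copies of $\C_\dot$.

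The heart of the argument is to transfer this picture to spectra. Using the construction of $K$ from \cite{Z-Kth-ass}, one checks that $K$ sends $\vee$ of assemblers to $\vee$ of spectra (so in particular $K((\D/F)_n) \simeq K(\D_n) \vee \bigvee_{i=1}^n K(\C_n)$), and that $K$ of a simplicial assembler is computed as the geometric realization of the levelwise $K$-theory (essentially because the construction of $K$ is built out of colimits over the simplex category already). Once these two compatibilities are in hand, $K((\D/F)_\dot)$ becomes the geometric realization of the simplicial spectrum $[n] \mapsto K(\D_n) \vee \bigvee_{i=1}^n K(\C_n)$ with face and degeneracy maps mirroring the bar construction for $K(F):K(\C_\dot) \rto K(\D_\dot)$. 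That realization is by definition the mapping cone of $K(F)$, which gives the desired cofiber sequence.

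The main obstacle is this compatibility step: proving that $K$ commutes with the relevant wedges and with geometric realization of simplicial assemblers in a strong enough sense that cofiber sequences are preserved. This is the content of the level cofibration conventions alluded to in Remark~\ref{rem:inc-cofib}, and it is where the choice of the symmetric spectra model with levelwise cofibrations is essential. Once this is settled, identification of the boundary is formal: the collapse map $(\D/F)_\dot \rto \Sigma\C_\dot$ that kills $\D_\dot$ is, after applying $K$, the standard map from the cofiber to the suspension of the source, so it induces the connecting homomorphism $K_n((\D/F)_\dot)\rto K_{n-1}(\C_\dot)$ up to the usual shift.
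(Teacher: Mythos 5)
This statement is not proved in the paper at all: it is quoted verbatim as Theorem C of \cite{Z-Kth-ass}, and the surrounding text explicitly defers both the definition and the proof to \cite[Section 4]{Z-Kth-ass}. So there is no internal proof to compare your argument against, and any assessment has to be of your sketch on its own terms. Your outline is the standard (and, as far as I can tell, correct) strategy, and it matches how the result is established in the cited reference: view $(\D/F)_\dot$ as a bar-construction model of the mapping cone, apply $K$ levelwise, and realize.

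That said, you have correctly located but not discharged the real content. The two facts you flag as ``compatibilities'' --- that $K(\C\vee\D)\simeq K(\C)\vee K(\D)$, and that $K$ of a simplicial assembler is the realization of the levelwise $K$-theory in a way that preserves cofiber sequences --- are themselves nontrivial theorems about the construction of $K$, not consequences of the level-cofibration conventions of Remark~\ref{rem:inc-cofib}; in \cite{Z-Kth-ass} the second is essentially how $K$ of a simplicial assembler is defined, and the first is a separate additivity-type result. Your proof as written assumes exactly the inputs that make the theorem true, so it is a faithful reduction rather than a complete argument. One further point worth making explicit: after realizing, you still need to check that the specific face-map pattern of Definition~\ref{def:simp-cofib} (with $d_0$ inserting $F$ and $d_n$ collapsing the last copy of $\C_n$ to $\initial$) produces the cone on $K(F)$ and not some other quotient --- this is where the extra degeneracy/contractibility argument lives, and it is also what identifies the collapse map $(\D/F)_\dot\rto\Sigma\C_\dot$ with the connecting map of the cofiber sequence.
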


\section{An aside on spectral sequences} \lbl{sec:ss}

This section contains a quick introduction to the construction of the spectral
sequence associated to a filtered spectrum, geared towards the results we need
in later sections.  In order to make our results easier to read we use a
nonstandard grading in the spectral sequences.  All of the material in this
section is well-known; for a reference, see \cite{mccleary01}.

Suppose that we have a filtered spectrum, by which we mean a diagram
\[{X_0 \rto^{i_1} X_1 \rto^{i_2} \cdots \rto^{i_{n}} X_n
  \rto^{i_{n+1}} \cdots \rto X},\]
where $X = \colim_n X_n$.  Then for each $n$ we have a long exact sequence
\[{\cdots\rto \pi_m(X_{n-1}) \rto^{\iota} \pi_m(X_n) \rto^{p} \pi_m(X_n,X_{n-1})
  \rto^{\partial} \pi_{m-1}(X_{n-1}) \rto \cdots}.\] Since $X$ is a spectrum,
this sequence is infinite in both directions.  We define
\[A_{p,q} = \pi_p(X_q) \qqandqq E^1_{p,q} = \pi_p(X_q,X_{q-1});\]
from the long exact sequences we then have the following diagram:
\begin{diagram-fixed}
  {     
    E^1_{p,q}   & A_{p-1,q-1} & E^1_{p-1,q-1} \\
    E^1_{p,q-1} & A_{p-1,q-2} & E^1_{p-1,q-2} \\
    E^1_{p,q-2} & A_{p-1,q-3} & E^1_{p-1,q-3} \\
  };
  \to{1-1}{1-2}^\partial \to{1-2}{1-3}^p 
  \to{2-1}{2-2}^\partial \to{2-2}{2-3}^p
  \to{3-1}{3-2}^\partial \to{3-2}{3-3}^p
  \to{3-2}{2-2}_{\iota} \to{2-2}{1-2}_{\iota}
\end{diagram-fixed}
By gluing copies of itself for different $p$ and $q$ this diagram can be
continued infinitely in all directions.  Note that each group $E^1_{p,q}$ and
$A_{p,q}$ appears in it only once, and that the original long exact sequences
appear as stair-steps: for example, the sequence
\begin{diagram-fixed}
  { & A_{p-1,q-1} & E^1_{p-1,q-1} \\
    E^1_{p,q-1} & A_{p-1,q-2} \\};
  \to{2-1}{2-2}^\partial
  \to{2-2}{1-2}^\iota \to{1-2}{1-3}^p
\end{diagram-fixed}
is exact.

The differential $d_1:E^1_{p,q} \rto E^1_{p-1,q-1}$ is defined to be $p\partial$
and $E^2_{p,q}\defeq \ker d_1/\im d_1$.  If $a$ is in $\ker d_1$ then $\partial
a$ must be in $\ker p = \im \iota$; thus there exists $a'$ in $A_{p-1,q-2}$ such
that $\iota(a') = a$.  We define $d_2(a) \defeq p(a')$, and $E^3_{p,q} \defeq
\ker d_2/\im d_2$.  This continues onwards: if $d_2(a) = 0$ then $a'$ is in $\ker p
= \im \iota$ and thus there exists $a''$ in $A_{p-1,q-3}$ such that $\iota(a'') =
a'$; we define $d_3(a) \defeq p(a'')$, and so on.  In general,
\[d_r:E^r_{p,q} \rto E^r_{p-1,q-r}.\]

We note the following lemma as it is useful later:
\begin{lemma} \lbl{lem:partial-0}
  For $x$ in $E^1_{p,q}$, if $\partial x = 0$ then $d_rx = 0$ for all $r \geq 1$.  
\end{lemma}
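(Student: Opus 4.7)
The plan is to unwind the inductive construction of the higher differentials given in the preceding paragraphs and exhibit a compatible tower of zero lifts at every page.

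For the base case $r = 1$, the formula $d_1 = p\partial$ immediately yields $d_1 x = p(\partial x) = p(0) = 0$, so $x$ survives to $E^2_{p,q}$. For $r \geq 2$, the construction of $d_r x$ is iterative: one produces $a^{(1)} \in A_{p-1, q-2}$ with $\iota(a^{(1)}) = \partial x$, then $a^{(2)} \in A_{p-1, q-3}$ with $\iota(a^{(2)}) = a^{(1)}$, and continues up the tower through $a^{(r-1)} \in A_{p-1, q-r}$, setting $d_r x \defeq p(a^{(r-1)})$. Since $\partial x = 0$ by hypothesis, the zero element of $A_{p-1, q-2}$ is a valid choice for $a^{(1)}$, and then $a^{(k)} = 0$ continues to satisfy $\iota(a^{(k)}) = 0 = a^{(k-1)}$ at every subsequent stage. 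With this uniform zero tower, $d_r x$ is represented on the nose by $p(0) = 0$, which vanishes in $E^r_{p-1, q-r}$ regardless of any indeterminacy from earlier boundaries.

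I do not expect any real obstacle: the conclusion falls out directly from the shape of the inductive definition, with the hypothesis $\partial x = 0$ removing the need for any nontrivial lifting at any stage. The only subtlety worth flagging is the well-definedness of $d_r$ across different choices of lift tower, but this is part of the standard exact-couple formalism referenced in the section and does not need to be reproved here.
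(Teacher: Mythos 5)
Your argument is correct and is exactly the one the paper intends (the lemma is stated without proof precisely because it falls out of the inductive construction of $d_r$ described just above it): with $\partial x = 0$ one may take the zero lift at every stage, so $d_r x$ is represented by $p(0)=0$ on every page.
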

%
With this indexing we have a spectral sequence
\begin{equation}
  \label{eq:genss}
  E^1_{p,q} \Rto \pi_p(X)
\end{equation}
in which $d_r: E^r_{p,q} \rto E^r_{p-1,q-r}$.
If we write $I_n:X_n \rto X$ then on the
$E^\infty$ page we have 
\[E^\infty_{p,q} = (I_q)_*\pi_p(X_q) / \big( (I_{q-1})_*\pi_p(X_{q-1}) \big).\]
Thus the $p$-th column of the $E^\infty$ page is the associated graded of the
filtration on $\pi_p(X)$ induced by the filtered spectrum.
As a consequence
of \cite[Theorem 6.1]{boardman99} we get the following:
\begin{lemma} \lbl{lem:ss-conv}
  If all of the spectra $X_i$ are connective (all $\pi_i = 0$ for $i<0$) then
  this spectral sequence converges to the homotopy groups of $X$.
\end{lemma}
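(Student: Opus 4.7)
The plan is to invoke Boardman's general convergence theorem \cite[Theorem 6.1]{boardman99}, after observing that the connectivity hypothesis forces the spectral sequence to be concentrated in the first quadrant; once that is in place, the conclusion is essentially formal.

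First I would adopt the convention $X_q \defeq *$ for $q < 0$, so that automatically $E^1_{p,q} = \pi_p(X_q, X_{q-1}) = 0$ whenever $q < 0$. Next, for $q \geq 0$ the cofiber $X_q/X_{q-1}$ of a map of connective spectra is itself connective, so $E^1_{p,q} \cong \pi_p(X_q/X_{q-1}) = 0$ whenever $p < 0$. Hence the $E^1$ page, and therefore every subsequent $E^r$ page, is concentrated in the first quadrant of the $(p,q)$-plane.

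Because $d_r$ shifts $(p,q)$ by $(-1, -r)$, for any fixed $(p,q)$ with $p, q \geq 0$ both the differentials entering and leaving $E^r_{p,q}$ vanish once $r$ exceeds $\max(p,q)+1$. Consequently $E^\infty_{p,q} = E^r_{p,q}$ stabilizes at a finite value of $r$, and the $\lim^1$ obstructions to strong convergence are automatically trivial. Moreover, since homotopy groups of spectra commute with sequential homotopy colimits, $\pi_p(X) = \colim_q \pi_p(X_q)$, so the filtration
\[ F^q \pi_p(X) \defeq \mathrm{im}\bigl((I_q)_* : \pi_p(X_q) \rto \pi_p(X)\bigr) \]
is exhaustive, and its associated graded agrees termwise with the stabilized values $E^\infty_{p,q}$ coming from the exact couple.

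With first-quadrant concentration plus an exhaustive filtration on $\pi_p(X)$, Boardman's criterion gives strong convergence of the spectral sequence to $\pi_p(X)$. The main obstacle, such as it is, is purely notational: translating the homological setup of a filtered spectrum into Boardman's exact-couple conventions so that his hypotheses can be quoted verbatim. Once that dictionary is fixed, no further calculation is required beyond the first-quadrant observation above.
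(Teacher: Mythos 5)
Your overall strategy---reduce everything to Boardman's Theorem 6.1---is exactly what the paper does (the paper offers nothing beyond the citation), and your verification that the $E^1$ page is concentrated in $p,q\ge 0$ is correct. However, one of your supporting claims is false, and it is worth flagging because the rest of the paper depends on its being false. With this grading the differential is $d_r\colon E^r_{p,q}\to E^r_{p-1,q-r}$, so the differentials \emph{entering} $E^r_{p,q}$ originate at $E^r_{p+1,q+r}$, which lies in the first quadrant for every $r$. These entering differentials do not vanish for large $r$: arbitrarily long differentials such as $d_n\colon E^r_{1,n}\to E^r_{0,0}$ are precisely the objects the paper goes on to compute (see Figure~\ref{fig:ss-kv} and Theorem~\ref{thm:ll<=>diffs}). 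Consequently $E^r_{p,q}$ does \emph{not} stabilize at a finite page; for $r>q$ one only obtains a tower of surjections $E^{q+1}_{p,q}\to E^{q+2}_{p,q}\to\cdots$, of which $E^\infty_{p,q}$ is the colimit. So your stated reason for the vanishing of the $\lim^1$ obstruction is not valid.

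The true half of your observation is what actually makes Boardman's theorem apply: since $E^1_{p,q}=0$ for $q<0$, the \emph{exiting} differentials out of $E^r_{p,q}$ vanish for $r>q$, so this is a half-plane spectral sequence with exiting differentials in Boardman's sense (the relevant half-plane condition is in the filtration degree $q$, not the homotopy degree $p$). For such a spectral sequence the cycle groups $Z^r_{p,q}$ stabilize even though the boundary groups keep growing, so Boardman's obstruction group $RE_\infty$ vanishes; combined with the fact that the filtration $F_q\pi_p(X)=\im\bigl((I_q)_*\bigr)$ is exhaustive (as $\pi_p$ commutes with the sequential homotopy colimit) and bounded below ($F_{-1}=0$), Theorem 6.1 yields strong convergence. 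If you replace the incorrect ``finite stabilization'' step with this exiting-differentials argument, the proof is complete and coincides with the paper's intended one.
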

%

\section{A spectral sequence for $K(\V)$} \lbl{sec:1stss}

The dimension filtration on $\V$ produces a filtered spectrum
\[K(\V^{(0)}) \rcofib K(\V^{(1)}) \rcofib \cdots \rcofib K(\V).\]
We now use the tools of Section~\ref{sec:ss} to write down and analyze the
spectral sequence we get from this filtered spectrum.

The filtration on $K(\V)$ gives us the following spectral sequence:
\begin{equation}
  \label{eq:kvss}
  E^1_{p,q} = K_p(\V^{(q,q-1)}) \Rto K_p(\V).
\end{equation}
The differential $d_r$ is a homomorphism $E^r_{p,q} \rto
E^r_{p-1,q-r}$.  As $E^r_{q-r}$ is $0$ for all $r > q$
Lemma~\ref{lem:ss-conv} applies and the spectral sequence converges.
We know that
\[\pi_0(\Sigma_+^\infty BG) \cong \Z \qqandqq \pi_1(\Sigma_+^\infty BG)
\cong G^\ab \oplus \Z/2,\] and since $\pi_*$ is a homology theory
applying $\pi_*$ changes $\bigvee$ to $\bigoplus$.  Therefore we can
compute the first two columns of the spectral sequence; a picture of
this appears in Figure~\ref{fig:ss-kv}.
\begin{figure}[h]
  \centering
  \begin{tikzpicture}[xscale=8]
    \node (OO) at (0,0) {$\Z^{\oplus B_0}$};
    \node (Odots) at (0,1) {$\vdots$};
    \node (On-1) at (0,2) {$\Z^{\oplus B_{n-1}}$};
    \node (On) at (0,3) {$\Z^{\oplus B_n}$};
    \node (FO) at (1,0) {$\bigoplus_{\alpha\in B_0} \Aut(\alpha)^\ab\oplus \Z/2$};
    \node (Fdots) at (1,1) {$\vdots$};
    \node (Fn-1) at (1,2) {$\bigoplus_{\alpha\in B_{n-1}}\Aut(\alpha)^\ab\oplus \Z/2$};
    \node (Fn) at (1,3) {$\bigoplus_{\alpha\in B_n} \Aut(\alpha)^\ab\oplus \Z/2$};
    
    \node (pi0) at (0,-1) {$\pi_0$};
    \node (pi1) at (1,-1) {$\pi_1$};
    \draw (OO.south west) +(-.1,-.1) to[draw] +(1.5,-.1);
    \draw[->] (Fn.west) -- node[auto,swap] {$d_1$} (On-1.east);
    \draw[->, densely dotted] (Fn.west) -- node[below right] {$d_{n}$} (OO.east);
  \end{tikzpicture}
  \caption{Spectral sequence for $K(\V)$}
  \label{fig:ss-kv}
\end{figure}

We would like to compute the differentials in this spectral sequence.  To do
this, we first compute the boundary map $\partial: K_1(\V^{(q,q-1)}) \rto
K_0(\V^{(q-1)})$ in the long exact sequence for the cofiber sequence
$K(\V^{(q-1)}) \rto K(\V^{(q)}) \rto K(\V^{(q,q-1)})$.

\begin{lemma} \lbl{lem:1st-partial-comp}
  Let $\alpha$ be in $B_q$ and let $\varphi$ be in $\Aut(\alpha)$.  For a representative
  $X$ of $\alpha$, let $\varphi$ be represented by an isomorphism $U \rto V$ of
  dense open subsets of $X$.  Then
  \[\partial[\varphi] = [X\bs V] - [X \bs U].\]
\end{lemma}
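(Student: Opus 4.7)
The plan is to represent $[\varphi] \in K_1(\V^{(q,q-1)})$ using the $\pi_1$-presentation from Theorem~\ref{thm:mainass}(2), and then trace it through the boundary morphism of Theorem~\ref{thm:cofiber} applied to the cofiber sequence $K(\V^{(q-1)}) \rto K(\V^{(q)}) \rto K(\V^{(q,q-1)})$ of Theorem~\ref{thm:cofib}.

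First I would exhibit $[\varphi]$ concretely in the form of Theorem~\ref{thm:mainass}(2). Take $I = J = \{*\}$, $A_* = U$, $B_* = X$, with $f_{+*}$ the open inclusion $U \hookrightarrow X$ and $f_{-*}$ the composite $U \rto^{\varphi} V \hookrightarrow X$. The singleton families $\{U \hookrightarrow X\}$ and $\{V \hookrightarrow X\}$ are covering families in $\V^{(q,q-1)}$ precisely because their complements $X\bs U$ and $X\bs V$ have dimension $<q$. Under the identification $K(\V^{(q,q-1)}) \simeq \bigvee_{\alpha\in B_q}\Sigma_+^\infty B\Aut(\alpha)$ extracted from the proof of Theorem~\ref{thm:graded}, this datum lands in the $\alpha$-summand and represents the image of $\varphi$ under $\Aut(\alpha) \rto \Aut(\alpha)^\ab \hookrightarrow \pi_1\Sigma_+^\infty B\Aut(\alpha)$, which is the class we want to compute $\partial$ on.

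Next I would lift this presentation through the bar construction of Definition~\ref{def:simp-cofib}, applied with $\C = \V^{(q-1)}$, $\D = \V^{(q)}$, and $F$ the dimension-filtration inclusion. The singletons $\{U \hookrightarrow X\}$ and $\{V \hookrightarrow X\}$ are not covering families in $\V^{(q)}$, but they complete canonically to disjoint covering families $\{U \hookrightarrow X,\ X\bs U \hookrightarrow X\}$ and $\{V \hookrightarrow X,\ X\bs V \hookrightarrow X\}$ by adjoining the lower-dimensional complements, and these adjoined pieces live in $\V^{(q-1)}$. The map of simplicial assemblers $(\V^{(q)}/F)_\dot \rto \Sigma\V^{(q-1)}_\dot$ from Theorem~\ref{thm:cofiber} collapses the $\V^{(q)}$-summand and retains only the adjoined $\V^{(q-1)}$-summands; reading off the result of this collapse on our lifted $\pi_1$-datum produces the image of $\partial[\varphi]$ in $K_0(\V^{(q-1)})$.

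Finally I would track signs. The two choices $\epsilon = \pm 1$ in the $\pi_1$-presentation contribute with opposite signs in the boundary (this is what makes the composite of the two inclusions into $X$ yield a loop rather than a null class), so the adjoined piece $X\bs U$ coming from $\epsilon = +1$ enters with one sign and $X\bs V$ coming from $\epsilon = -1$ enters with the opposite, yielding $\partial[\varphi] = [X\bs V] - [X\bs U]$. The main obstacle is precisely this sign bookkeeping, because the conventions from Theorem~\ref{thm:mainass}(2) and from the face maps in Definition~\ref{def:simp-cofib} must be made to line up correctly with the group structure on $\pi_1\Sigma_+^\infty B\Aut(\alpha)$. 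A reassuring sanity check is that replacing $\varphi$ by $\varphi^{-1}$ interchanges $U$ and $V$ and so negates the putative formula, in accordance with $[\varphi^{-1}] = -[\varphi]$; similarly, when $\varphi$ is the identity (so $U = V = X$) the formula gives $0$, as it must.
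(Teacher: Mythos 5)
Your proposal is correct and follows essentially the same route as the paper: represent $[\varphi]$ by the two covering families $\{U\rto X\}\cup\{X\bs U\rto X\}$ and $\{U\rto^{\varphi}V\rto X\}\cup\{X\bs V\rto X\}$ and read off the boundary as the difference of the adjoined lower-dimensional complements. The only difference is cosmetic: where you unwind the bar-construction model of the cofiber from Theorem~\ref{thm:cofiber} to justify the sign and the formula $\partial x = [Z]-[Y]$, the paper outsources exactly that step to the cited results of \cite{Z-ass-pi1} (Corollary 3.10 and Proposition 3.13).
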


\begin{proof}
  From \cite[Corollary 3.10]{Z-ass-pi1} we know that each $x$ in
  $K_1(\V^{(q,q-1)})$ can be represented by elements consisting of the following
  data: two finite disjoint covering families
  \[\{f_i: X_i \rto X\}_{i\in I} \cup \{f:Y \rto X\}\] and \[\{g_i:X_i \rto
  X\}_{i\in I} \cup \{g':Z\rto X\},\] such that $\dim Y, \dim Z \leq q-1$.
  More informally, an element of $K_1(\V^{(q,q-1)})$ consists of a
  variety $X$, a dense open subset (represented by the union of the
  $X_i$'s) which is embedded into $X$ in two different ways
  (represented by the maps $f_i$ and $g_i$) as well as the data of the
  two different complements to the embeddings in $X$ ($Y$ and $Z$).
  Applying \cite[Proposition 3.13]{Z-ass-pi1} to this representation
  we get that 
  \[\partial x = [Z]-[Y]\]
  in $K_0(\V^{(q-1)})$.

  Now consider $\varphi$.  The class of $\varphi$ in $K(\V^{(q,q-1)})$ can be
  represented by the two covering families
  \[\{U \rto X\} \cup \{X\bs U \rto X\}
  \quad\hbox{and}\quad \{U \rto^{\varphi} V \rto X\} \cup \{X\bs V
  \rto X\}.\] Therefore
  $\partial[\varphi] = [X \bs V] - [X \bs U]$.
\end{proof}

Thus the boundary map in the long exact sequence associated to the inclusion of
one filtration degree into the next measures the error of a birational
automorphism of the variety extending to a piecewise automorphism.  In order to
make this connection more precise, we start with the following lemma.

\begin{lemma} \lbl{lem:ll<=>diffs} Let $X$ and $Y$ be varieties of dimension at
  most $n$ such that $[X] = [Y]$ in $K_0(\V^{(n,n-1)})$.  Then we can write $X =
  U \uplus X'$ and $Y = V \uplus Y'$ such that $U$ and $V$ are piecewise isomorphic
  and $\dim X', \dim Y' < n$.
\end{lemma}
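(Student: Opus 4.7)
My plan is to reduce the statement to the computation of $K_0(\V^{(n,n-1)})$ implicit in Theorem~\ref{thm:graded}.

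First, for any variety $Z$ of dimension at most $n$, I would express $[Z]$ in $K_0(\V^{(n,n-1)})$ as a sum over the $n$-dimensional irreducible components of $Z$. Let $Z_1, \ldots, Z_k$ be those components, let $\Sigma = \bigcup_{i \neq j}(Z_i \cap Z_j)$, and set $Z_i^\circ = Z_i \bs \Sigma$. Then each $Z_i^\circ$ is a dense open subset of the irreducible variety $Z_i$, the $Z_i^\circ$ are pairwise disjoint locally closed subvarieties of $Z$, and $Z \bs \bigsqcup_i Z_i^\circ$ has dimension less than $n$ (it is the union of $\Sigma$ and the union of irreducible components of dimension $<n$). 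Hence $\{Z_i^\circ \rto Z\}_i$ extends to a disjoint covering family of $Z$ inside $\V^{(n)}$ with the remaining maps of strictly lower-dimensional domain, so $[Z] = \sum_i [Z_i^\circ]$ in $K_0(\V^{(n,n-1)})$.

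Next, Theorem~\ref{thm:graded} (combined with Theorem~\ref{thm:cofib}) gives
\[K_0(\V^{(n,n-1)}) \cong \bigoplus_{\alpha \in B_n} \Z,\]
with the class of any irreducible $n$-dimensional variety mapping to the generator indexed by its birational class. Applying the decomposition of the previous step to $X$ and $Y$ and comparing $\alpha$-th coordinates in $\sum_i [X_i^\circ] = \sum_j [Y_j^\circ]$ yields, for each $\alpha \in B_n$, an equality between the number of $X_i$'s and the number of $Y_j$'s lying in class $\alpha$; assembling these produces a bijection $\sigma$ between the two indexing sets such that $X_i$ is birationally equivalent to $Y_{\sigma(i)}$ for every $i$.

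Finally, for each $i$, birational equivalence furnishes a dense open $U_i \subseteq X_i^\circ$ and a dense open $V_i \subseteq Y_{\sigma(i)}^\circ$ together with an isomorphism $U_i \cong V_i$; shrinking if needed, I may assume that $\dim(X_i^\circ \bs U_i)$ and $\dim(Y_{\sigma(i)}^\circ \bs V_i)$ are both less than $n$. Setting $U = \bigsqcup_i U_i$, $V = \bigsqcup_i V_i$, $X' = X \bs U$, and $Y' = Y \bs V$ gives the desired decomposition, with $U$ and $V$ piecewise isomorphic via the $U_i \cong V_i$. The main obstacle in this argument is the middle step—identifying $K_0(\V^{(n,n-1)})$ with $\bigoplus_\alpha \Z$ in a way compatible with the classes of actual varieties—but this is already embedded in the proof of Theorem~\ref{thm:graded}, so very little additional effort is required.
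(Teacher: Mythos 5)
Your proposal is correct and follows essentially the same route as the paper's proof: decompose each variety into pairwise disjoint dense opens of its $n$-dimensional irreducible components plus a lower-dimensional remainder, use the freeness of $K_0(\V^{(n,n-1)})$ on birational classes to match components, and then realize each matching by an honest isomorphism of dense opens. The only cosmetic difference is that you explicitly invoke the $\pi_0$ computation from Theorem~\ref{thm:graded} where the paper argues the matching slightly more informally.
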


\begin{proof}
  Write $X = \bigcup_{i=0}^n X_i$, where for $i>0$ the $X_i$ are the irreducible
  components of $X$ of dimension $n$, and $X_0$ is the union of all of the
  irreducible components of $X$ which have dimension less than $n$.  For $i >
  0$, let $U_i = X_i \bs \bigcup_{j \neq i} X_j$, and let $X_0' = X \bs
  \bigcup_{i=1}^n U_i$.  Thus we have
  $X = X_0' \uplus \biguplus_{i=1}^n U_i$
  where $\dim X_0' < n$, $\dim U_i = n$
  and each $U_i$ is irreducible.  Analogously, $Y$ can be written as $Y_0' \uplus
  \biguplus_{j=1}^m V_j$.

  Since all varieties of dimension less than $n$ are $0$ in $K_0(\V^{(n,n-1)})$, 
  $[X] = \sum_{i=1}^n [U_i]$ and $[Y] = \sum_{j=1}^m [V_j]$.  Since every open
  subset of an irreducible variety is dense and $[X]=[Y]$, it follows that $m =
  n$, and we must have a permutation $\sigma$ in $\Sigma_n$ such that $[U_i] =
  [V_{\sigma(i)}]$ in $K_0(\V^{(n,n-1)})$.  Thus for each $i\geq 1$ there exists
  a birational isomorphism $U_i \rdash V_{\sigma(i)}$, given as an isomorphism
  $\varphi_i:U_i' \rto V_{\sigma(i)}'$.  Let $X_i' = U_i \bs U_i'$ and $Y_i' =
  V_i \bs V_i'$; note that $\dim X_i',\dim Y_i' < n$.  Thus if we define
  $X' = \biguplus_{i=0}^n X_i'$ and $U = \biguplus_{i=1}^n U_i'$
  then $X = X' \uplus U$ and $\dim X' < n$.  Similarly, we can define
  $Y' = \biguplus_{i=0}^n Y_i'$ and $V = \biguplus_{i=1}^n V_i'$.
  By definition $U$ and $V$ are piecewise isomorphic and we are done.
\end{proof}

The next theorem shows that the spectral sequence constructs an obstruction to
$\varphi$ extending to a piecewise isomorphism, thereby answering the question
of Gromov in \cite[p121]{gromov99}.  It also directly implies
Theorem~\ref{thm:diff=ker}.

\begin{theorem} \lbl{thm:ll<=>diffs}
  A birational automorphism $\varphi$ of an irreducible variety $X$ extends to a
  piecewise automorphism of $X$ if and only if $d_r[\varphi] = 0$ for all $r\geq
  1$.
\end{theorem}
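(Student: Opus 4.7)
The forward direction should be direct. If $\varphi$ extends to a piecewise automorphism $\tilde\varphi$ of $X$, then as a set-theoretic bijection with $\tilde\varphi(U) = V$, the map $\tilde\varphi$ restricts to a piecewise isomorphism $W = X \bs U \to X \bs V = W'$, so $[W] = [W']$ in $K_0(\V^{(n-1)})$. Hence $\partial[\varphi] = 0$ by Lemma~\ref{lem:1st-partial-comp}, and then $d_r[\varphi] = 0$ for all $r \geq 1$ by Lemma~\ref{lem:partial-0}.

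For the converse I would induct on $n = \dim X$, with $n = 0$ vacuous (a dense open in a $0$-dimensional irreducible variety is the whole variety). In the inductive step, $d_1[\varphi] = 0$ gives $[W] = [W']$ in $K_0(\V^{(n-1,n-2)})$, and Lemma~\ref{lem:ll<=>diffs} (with $n$ replaced by $n-1$) decomposes $W = A \uplus B$ and $W' = A' \uplus B'$ with $A$ and $A'$ piecewise isomorphic and $\dim B, \dim B' < n-1$. Combining $\varphi$ with a piecewise isomorphism $A \to A'$ yields a piecewise isomorphism $X \bs B \to X \bs B'$ extending $\varphi$, so it suffices to show that $B$ and $B'$ are piecewise isomorphic.

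To do this I would iterate, one step at a time, driven by the vanishing of $d_2[\varphi], d_3[\varphi], \ldots$. The witness of $d_2[\varphi] = 0$ is a class $z_1 \in K_1(\V^{(n-1,n-2)})$ with $p(\partial z_1) = p(y_1)$, where $y_1 \in K_0(\V^{(n-2)})$ is a chosen preimage of $\partial[\varphi]$ under $\iota$. Using Theorem~\ref{thm:mainass}(2) and Lemma~\ref{lem:1st-partial-comp}, I would represent $z_1$ as a finite collection of birational automorphisms $\varphi_{1,i}$ of irreducible varieties of dimension at most $n-1$, show that the vanishing of the higher $d_r[\varphi]$ forces the vanishing of all $d_r[\varphi_{1,i}]$, and apply the inductive hypothesis to extend each $\varphi_{1,i}$ piecewise. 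These piecewise extensions peel off another layer of $B$ and $B'$ of dimension exactly $n-2$, shrinking the residues by one in dimension; iterating down to dimension $0$, where the residues become empty, completes the extension.

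The main obstacle is verifying that $d_r[\varphi_{1,i}] = 0$ for all $r$ follows from $d_{r+1}[\varphi] = 0$. This is a diagram chase in the unrolled exact couple of Section~\ref{sec:ss}: one must show that the chain of lifts of $\partial[\varphi]$ through the $\iota$-maps, once partially resolved by $\partial z_1$, becomes exactly the chain of lifts needed to compute the higher differentials of the $[\varphi_{1,i}]$ one filtration degree lower. In essence, the sequence of witnesses produced by $d_1[\varphi] = d_2[\varphi] = \cdots = 0$ is precisely the algebraic avatar of a layer-by-layer construction of the piecewise extension of $\varphi$.
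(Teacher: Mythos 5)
Your forward direction is exactly the paper's argument, and the first step of your converse (use $d_1[\varphi]=0$ and Lemma~\ref{lem:ll<=>diffs} to split off a piecewise-isomorphic top-dimensional piece of the two complements) also matches. But from there the paper does something simpler and more robust than what you propose: it inducts on the page number $r$, carrying along the strengthened statement that $X = U_r \uplus X'_r = V_r \uplus Y'_r$ with $U_r,V_r$ piecewise isomorphic, $\dim X'_r,\dim Y'_r < n-r$, \emph{and} $\partial[\varphi]=[Y'_r]-[X'_r]$. The last clause is the whole point: the residues $[Y'_r]-[X'_r]$ are themselves the chain of lifts of $\partial[\varphi]$ through the maps $\iota$ that defines the higher differentials, so $d_{r+1}[\varphi]$ is literally the class of $[Y'_r]-[X'_r]$ in $K_0(\V^{(n-r-1,n-r-2)})$, and Lemma~\ref{lem:ll<=>diffs} can be applied again one dimension down. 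No auxiliary automorphisms and no appeal to the theorem in lower dimension are needed.

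The step you yourself flag as the main obstacle is a genuine gap, and I do not think it can be repaired in the form you state it. A witness $z_1$ of $d_2[\varphi]=0$ is only determined up to adding any class in $\ker d_1 \subseteq E^1_{1,n-1}$, and such a class may support nonzero higher differentials; so there is no reason the components $\varphi_{1,i}$ of a given witness are permanent cycles, and "$d_{r+1}[\varphi]=0 \Rightarrow d_r[\varphi_{1,i}]=0$" is false as stated. What the vanishing of the higher differentials of $[\varphi]$ actually encodes is the existence of a \emph{compatible chain of lifts} of $\partial[\varphi]$ (possibly corrected at each stage by boundaries of new witnesses), not the permanence of any particular witness. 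If you want to argue via the witnesses, the most you can extract from $d_1 z_1 = p(y_1)$ is the relation $[B']-[B] \equiv \sum_i([W_i\bs V_i]-[W_i\bs U_i])$ in $K_0(\V^{(n-2,n-3)})$, which is information about a sum, not about each $\varphi_{1,i}$; the clean way out is the paper's route of tracking the residues directly.
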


\begin{proof}
  Suppose that $\dim X = q$.  Since $X$ is irreducible it is represented by a
  class in $B_q$, and thus participates in the spectral sequence.  Note that in
  the rest of the proof all unions are disjoint; however, since we are not
  taking coproducts of varieties but rather thinking of a decomposition of a
  given variety, we use the symbol $\uplus$ rather than $\amalg$.  Suppose that
  $\varphi$ is defined as an isomorphism $U \rto V$ for $U \subseteq X$ and $V
  \subseteq Y$.

  First, suppose that $\varphi$ extends to a piecewise automorphism.  Then
  $[X\bs V] = [X \bs U]$ in $K_0[\V^{(q-1)}]$ and thus $\partial[\varphi] = 0$.
  Then by Lemma~\ref{lem:partial-0}, $d_r[\varphi] = 0$ for all $r \geq 1$.

  Now suppose that $d_r[\varphi] =0$ for all $r \geq 1$.  Since $d_1[\varphi] =
  0$ it follows that $[X \bs U] = [X \bs V]$ in $K_0(\V^{(q,q-1)})$.  We show by
  induction that we can write $X = U_r \uplus X'_r = V_r \uplus Y'_r$ with $U_r$ and
  $V_r$ piecewise isomorphic, $\dim X'_r,\dim Y'_r < n-r$ and $\partial[\varphi]
  = [Y_r'] - [X_r']$.  Then setting $r = n$ gives us the desired result.

  The base case $r=0$ is given to us by the definition of $\varphi$: define $U_0
  = U$ and $V_0 = V$, and $X'_0 = X \bs U_0$, $Y'_0 = X \bs V_0$.  We assume the
  case for $r-1$, and prove it for $r$.  By the inductive hypothesis
  $\partial[\varphi] = [Y_{r-1}']-[X_{r-1}']$ we have
  \[d_r[\varphi] = [Y_{r-1}']-[X_{r-1}']\]  in $K_0(\V^{(n-r,n-r-1)})$. Since
  $d_r[\varphi] = 0$ by assumption, Lemma~\ref{lem:ll<=>diffs} applies and we
  can write $X_{r-1}' = U'_r \uplus X_r'$ and $Y_{r-1}' = V'_r \uplus Y_r'$ with
  $U'_r \cong V'_r$ and $\dim X_r', \dim Y_r' < n-r$.  If we set $U_r = U_{r-1}
  \uplus U_r'$ and $V_r = V_{r-1} \uplus V_r'$ then $U_r$ and $V_r$ are
  piecewise isomorphic.  Thus all that we need to check to finish the induction
  is the formula for $\partial[\varphi]$.  This is straightforward, as
  \[\partial[\varphi] = [Y_{r-1}'] - [X_{r-1}'] = [V_r'] + [Y_r'] - [U_r'] -
  [X_r'] = [Y_r'] - [X_r'],\]
  since $U_r'$ and $V_r'$ are isomorphic.
\end{proof}

\begin{remark} \lbl{rem:z20} In Figure~\ref{fig:ss-kv} each summand in the
  $\pi_1$-column has a $\Z/2$ component, which we ignored in the above
  discussion.  This is because $d_r$ is uniformly zero on all of these.  To see
  this, note that we have morphisms of assemblers
  \[\S \rto \S_G \rto \S\]
  which take $\initial$ to $\initial$ and $*$ to $*$.  The composition of these
  morphisms is the identity.  We know that $K(\S_G) \simeq \Sigma_+^\infty BG
  \simeq \S \vee \Sigma^\infty BG$, with the $\S$ coming from the $K(\S)$ that
  is split off by the above sequence. This $\S$ keeps track of the combinatorial
  details of what is going on: its $\pi_0$ is $\Z$ and corresponds to the number
  of varieties we are considering, and $\pi_1$ keeps track of the permutations
  of the varieties (and corresponds to the sign of the permutation.)  The $\Z/2$
  in each component of $\pi_1$ comes from the $\S$ summand, and using this we
  write down representatives for the $\Z/2$ indexed by $\alpha$: it is
  represented by the two families
  \[\{X \sqcup X \rto^\tau X \sqcup X\} \cup \{\}
  \quad\hbox{and}\quad \{X \sqcup X \rto^= X \sqcup
  X\} \cup \{\}\] where $[X]=\alpha$ in $B_q$. Thus $\partial$ on
  this generator is $0$.
\end{remark}

\section{Multiplication by $\LL$} \lbl{sec:2ndss}

Now consider the morphism of assemblers $L:\V \rto \V$ which sends the variety
$X$ to $X\times \A^1$, and let $C$ be the cofiber of the map $K(L)$ of spectra.
Then we have a long exact sequence in homotopy given by
\begin{equation} \label{eq:LEShtpy}
{K_1(\V) \rto^{p_1} \pi_1C \rto K_0(\V) \rto^{\cdot \LL}
  K_0(\V) \rto \pi_0C \rto 0.}
\end{equation}
The cokernel of $p_1$ is equal to the kernel of multiplication by
$\LL$.  Thus $\LL$ is a zero divisor if and only if $p_1$ is a surjection.

We thus want to analyze the homotopy type of $C$ together with the image of
$p_1$.  It turns out that it is possible to construct $C$ as the $K$-theory of a
simplicial assembler and to analyze this map through the structure of
assemblers.  Applying Theorem~\ref{thm:cofiber}, we can write $C = K(\V/L)$. By
the construction in Theorem~\ref{thm:cofiber}, $\V^{(n)}/L$ is a simplicial
subassembler of $\V^{(n+1)}/L$, and (as mentioned in Remark~\ref{rem:inc-cofib})
the inclusion $\V^{(n)}/L \rcofib \V^{(n+1)}/L$ gives a cofibration on
$K$-theory.  We get the following commutative diagram:
\begin{equation} \label{diag:filtspec}
\begin{tikzpicture}[baseline]%
\matrix (m) [matrix of math nodes, row sep=2.5em,%
column sep=2.5em, text height=2.2ex, text depth=0.7ex]
  { \cdots & K(\V^{(n-2)}) & K(\V^{(n-1)}) & \cdots & K(\V) \\
    \cdots & K(\V^{(n-1)}) & K(\V^{(n)}) & \cdots & K(\V) \\
    \cdots & K(\V^{(n-1)}/L) & K(\V^{(n)}/L) & \cdots & K(\V/L) \\};
  \cofib{1-1}{1-2} \cofib{1-2}{1-3} \cofib{1-3}{1-4} \cofib{1-4}{1-5} 
  \cofib{2-1}{2-2} \cofib{2-2}{2-3} \cofib{2-3}{2-4} \cofib{2-4}{2-5} 
  \cofib{3-1}{3-2} \cofib{3-2}{3-3} \cofib{3-3}{3-4} \cofib{3-4}{3-5} 
  \to{1-2}{2-2}^{K(L)} \to{1-3}{2-3}^{K(L)} \to{1-5}{2-5}^{K(L)}
  \to{2-2}{3-2} \to{2-3}{3-3} \to{2-5}{3-5}
\end{tikzpicture}
\end{equation}

We define the function
\[\ell:B_{n-1} \rto B_{n}\qquad\hbox{by}\qquad \ell[X] = [X\times\A^1].\]

\begin{proposition} \lbl{prop:Cbetacofib}
  For $\beta$ in $B_n$, let $\nabla_\beta: \bigvee_{\ell^{-1}(\beta)} \S \rto \S$
  be the fold map, and let 
  \[C_\beta = \hocofib \nabla_\beta \vee \hocofib \left( \bigvee_{\alpha\in
      \ell^{-1}(\beta)} \Sigma^\infty B\Aut(\alpha) \rto \Sigma^\infty
    B\Aut(\beta)\right).\] There exists a spectral sequence
  \begin{equation}
    \label{eq:kv/lss}
    \tilde E^1_{p,q} = \bigoplus_{\beta\in B_q} \pi_p C_\beta \Rto \pi_p K(\V/L).
  \end{equation}
\end{proposition}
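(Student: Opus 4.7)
The plan is to obtain the spectral sequence by applying the general machinery of Section~\ref{sec:ss} to the filtered spectrum sitting in the third row of diagram~(\ref{diag:filtspec}),
\[K(\V^{(0)}/L) \rcofib K(\V^{(1)}/L) \rcofib \cdots \rcofib K(\V/L).\]
Since each of these (simplicial) assemblers has connective $K$-theory, Lemma~\ref{lem:ss-conv} will guarantee convergence to $\pi_p K(\V/L)$, so the only real content is to identify the successive cofibers with $\bigvee_{\beta\in B_q} C_\beta$.

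For this identification, I would study the commutative square formed by the upper two rows of (\ref{diag:filtspec}) in columns $q-1$ and $q$, whose horizontal maps are the filtration inclusions and whose vertical maps are $K(L)$:
\[\begin{array}{ccc} K(\V^{(q-2)}) & \rcofib & K(\V^{(q-1)}) \\ \downarrow & & \downarrow \\ K(\V^{(q-1)}) & \rcofib & K(\V^{(q)}). \end{array}\]
Its vertical cofibers are $K(\V^{(q-1)}/L)$ and $K(\V^{(q)}/L)$, so by the octahedral axiom the cofiber of the induced map between them agrees with the cofiber of the induced map on horizontal cofibers. By Theorem~\ref{thm:cofib} those horizontal cofibers are $K(\V^{(q-1,q-2)})$ and $K(\V^{(q,q-1)})$, and by Theorem~\ref{thm:graded} the induced map rewrites as
\[\bigvee_{\alpha\in B_{q-1}} \Sigma_+^\infty B\Aut(\alpha) \rto \bigvee_{\beta\in B_q} \Sigma_+^\infty B\Aut(\beta).\]
Since multiplication by $\A^1$ sends the birational class $\alpha$ to $\ell(\alpha)$ and induces a group homomorphism $\Aut(\alpha) \rto \Aut(\ell(\alpha))$, after choosing representatives the $\alpha$-summand lands entirely in the $\ell(\alpha)$-summand. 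Using the natural splitting $\Sigma_+^\infty BG \simeq \S \vee \Sigma^\infty BG$ (which is preserved by basepoint-preserving maps $BH_+ \rto BG_+$ induced by group homomorphisms), the part of this map landing in the $\beta$-summand decomposes as the fold $\nabla_\beta:\bigvee_{\ell^{-1}(\beta)} \S \rto \S$ on the sphere-spectrum summands and as $\bigvee_{\alpha\in \ell^{-1}(\beta)} \Sigma^\infty B\Aut(\alpha) \rto \Sigma^\infty B\Aut(\beta)$ on the reduced summands. Taking cofibers piece by piece gives $\bigvee_{\beta\in B_q} C_\beta$, and then applying $\pi_p$ yields the claimed $E^1$-term.

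The main obstacle I anticipate is the $\beta$-by-$\beta$ decomposition of the induced map on horizontal cofibers: the functor $L$ only lands in the $\ell(\alpha)$-summand after the birational-identification devissage used to prove Theorem~\ref{thm:graded}, so one has to argue that representatives $X_\alpha$ and $X_\beta$ can be chosen compatibly enough that $L$ restricts, up to the equivalences of Theorem~\ref{thm:1stapprox}, to honest maps of subassemblers $\C_{X_\alpha} \rto \C_{X_{\ell(\alpha)}}$. Once this is settled, the octahedral step and the splitting of $\Sigma_+^\infty BG$ are routine, and the spectral sequence itself is immediate from Section~\ref{sec:ss}.
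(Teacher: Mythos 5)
Your proposal is correct and follows essentially the same route as the paper: reduce to identifying the successive cofibers of the filtration on $K(\V/L)$, interchange the two cofiber constructions, identify the resulting map with $\bigvee_{\alpha\in B_{q-1}}\Sigma_+^\infty B\Aut(\alpha)\rto\bigvee_{\beta\in B_q}\Sigma_+^\infty B\Aut(\beta)$, and split off the sphere summands to obtain $C_\beta$. The only differences are that the paper performs the interchange at the level of simplicial assemblers, where $(\V^{(n)}/L)/\tilde\iota\cong(\V^{(n)}/\iota)/\tilde L$ holds on the nose rather than via the octahedral axiom, and the compatibility-of-representatives issue you flag as the main obstacle is resolved there by an explicit diagram through the subassemblers $\C_{X_\alpha}$ and $\C_{X_\alpha\times\A^1}$ together with \cite[Theorem 2.1(2)]{Z-Kth-ass}.
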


\begin{proof}
  It suffices to show that
  \[\makeshort{\hocofib\Big(K(\V^{(n-1)}/L) \rto K(\V^{(n)}/L)\Big) \simeq \bigvee_{\beta\in B_n} C_\beta.}\]
  Then the spectral sequence from (\ref{eq:genss}) for the filtration on
  $K(\V/L)$ is the one in the statement of the proposition.  Therefore
  we focus on proving the claim.  For this proof, we restrict our attention to
  the subassembler of $\V$ consisting of only irreducible varieties; in order to
  avoid cluttering the notation, we still denote it by $\V$.  By
  Theorem~\ref{thm:1stapprox} this gives the correct homotopy type.
  
  Write $\iota: \V^{(n-1)} \rcofib \V^{(n)}$, and $\tilde \iota: V^{(n-1)}/L
  \rto \V^{(n)}/L$.  Then we have a diagram
  \begin{equation} \label{diag:total-cofiber}
    \begin{tikzpicture}[baseline]%
      \matrix (m) [matrix of math nodes, row sep=2.5em,%
      column sep=2.5em, text height=2.2ex, text depth=0.7ex] 
      { \V^{(n-2)} & \V^{(n-1)} & V^{(n-1)}/\iota \\
        \V^{(n-1)} & \V^{(n)} & \V^{(n)}/\iota \\
        \V^{(n-1)}/L & \V^{(n)}/L & (\V^{(n)}/L)/\tilde\iota \\};
      \cofib{1-1}{1-2}^{\iota} \cofib{2-1}{2-2}^{\iota} \to{1-1}{2-1}^{L}
      \to{1-2}{2-2}^{L} \to{1-3}{2-3}^{\tilde L} \to{1-2}{1-3} \to{2-2}{2-3} \to{2-1}{3-1}
      \to{2-2}{3-2} \cofib{3-1}{3-2}^{\tilde \iota} \to{3-2}{3-3} \to{2-3}{3-3}
    \end{tikzpicture}
  \end{equation}
  From Definition~\ref{def:simp-cofib} we have
  $(\V^{(n)}/L)/\tilde\iota \cong (\V^{(n)}/\iota)/\tilde L$.
  Therefore if we want to determine the homotopy type of
  $K((\V^{(n)}/L)/\tilde\iota)$ it suffices to determine the cofiber of
  the map
  \[K(\tilde L): K(\V^{(n-1)}/\iota) \rto K(\V^{(n)}/\iota).\] We have a morphism of
  assemblers $V^{(n)}/\iota \rto \V^{(n,n-1)}$ given by mapping a
  variety of dimension $n$ to itself and a variety of dimension less than $n$ to
  the initial object.  Then the diagram
  \begin{diagram-fixed}
    { \V^{(n-1)}/\iota & \V^{(n-1,n-2)} \\
      \V^{(n)}/\iota & \V^{(n,n-1)} \\ };
    \to{1-1}{2-1}_{\tilde L} \to{1-2}{2-2}^{\tilde L}
    \we{1-1}{1-2} \we{2-1}{2-2}
  \end{diagram-fixed}
  commutes, and by Theorem~\ref{thm:cofib} the two horizontal morphisms are weak
  equivalences after applying $K$-theory.  Thus after applying $K$-theory, the
  cofiber of the left-hand map $\tilde L$ is the same as the 
  cofiber of the right-hand map $\tilde L$.

  For $\beta$ in $B_n$, let $(\V^{(n,n-1)})|_\beta$ be the full subassembler of
  those varieties whose birational isomorphism class is $\beta$.  Then we can
  rewrite the right-hand column as
  \[\bigvee_{\beta\in B_n} \left(\bigg( \bigvee_{\alpha \in \ell^{-1}(\beta)}
    \V^{(n-1,n-2)}|_\alpha\bigg) \rto \V^{(n,n-1)}|_\beta\right).\] Since
  $K$-theory and cofibers commute with coproducts, it suffices to show
  that the cofiber of each term in the above wedge product is weakly
  equivalent to $C_\beta$.  We want to show that the splitting from
  Theorem~\ref{thm:graded} is compatible with this map, so that we can compute
  cofibers in terms of homomorphisms of groups instead of in terms of
  maps of spaces.
  
  We therefore aim to construct a splitting of both sides of the map
  simultaneously.  Fix a representative $X_\beta$.  For each $\alpha$ in
  $\ell^{-1}(\beta)$, choose an isomorphism $\psi_\alpha: \Aut_k k(X_\alpha\times
  \A^1)\rto \Aut_k k(X_\beta)$. For any variety $Y$ with a chosen embedding $Y
  \rcofib \A^N$ we let $\C_Y$ be the assembler of subvarieties of $Y$,
  considered as algebraic subsets of $Y(\bar k)$ defined over $k$.  We then have
  the following diagram:
  \begin{diagram-fixed}
    { \S_{\Aut_kk(X_\alpha)} & \S_{\Aut_kk(X_\alpha\times \A^1)} &  \S_{\Aut_kk(X_\beta)} \\
      \C_{X_\alpha} & \C_{X_\alpha \times \A^1}  \\
      \V^{(n-1,n-2)}|_\alpha & \V^{(n,n-1)}|_\beta  \\}; 
    \to{1-1}{1-2}^\varphi \to{2-1}{2-2} \to{3-1}{3-2} 
    \we{2-1}{1-1} \we{2-2}{1-2} \we{2-1}{3-1} \we{2-2}{3-2} 
    \to{1-2}{1-3}^{\psi_\alpha}
  \end{diagram-fixed}
  Note that the morphism across the bottom is a component of the map we're
  trying to find the cofiber of.  The morphisms in the top row are all induced
  by group homomorphisms. The first is induced by the inclusion of
  $\Aut_kk(X_\alpha)$ into $\Aut_kk(X_\alpha\times\A^1)$.  By \cite[Theorem
  2.1(2)]{Z-Kth-ass}, the top square commutes.  The vertical maps from the
  second row to the third row take each subvariety to itself; they are
  equivalences after applying $K$ by Theorem~\ref{thm:1stapprox}.  The bottom
  square commutes by definition.

  From this diagram we can conclude that taking the cofiber of
  the bottom row is weakly equivalent to taking the cofiber of the top
  row, and we see that
  \[ \hocofib \bigvee_{\alpha\in \ell^{-1}(\beta)} K(\V^{(n-1,n-2)}|\alpha) \rto
  K(\V^{(n,n-1)}|_\beta) \simeq \hocofib\bigvee_{\alpha \in \ell^{-1}(\beta)}
  K(\S_{\Aut(\alpha)}) \rto K(\S_{\Aut(\beta)}).\]

  We now compute this cofiber.  We would like the cofiber to
  split further into a part that just comes from a homomorphism of groups, and a
  part that comes from a fold map of sphere spectra.  Note that we have the
  following commutative diagram of assemblers:
  \begin{diagram}
    { \bigvee_{\alpha\in \ell^{-1}(\beta)} \S & \bigvee_{\alpha \in
        \ell^{-1}(\beta)} \S_{\Aut(\alpha)} &
      \bigvee_{\alpha\in \ell^{-1}(\beta)} \S \\
      \S & \S_{\Aut(\beta)} & \S\\ };
    \to{1-1.mid east}{1-2.mid west} \to{1-2.mid east}{1-3.mid west} 
    \to{2-1.mid east}{2-2.mid west} \to{2-2.mid east}{2-3.mid west}
    \to{1-1}{2-1}_{f_\beta} 
    \to{1-2}{2-2}^L \to{1-3}{2-3}^{f_\beta}
  \end{diagram}
  The maps $f_\beta$ are the fold maps, and $K(f_\beta) \simeq \nabla_\beta$.  The
  horizontal morphisms are induced by the group homomorphisms $1 \rto G \rto 1$.
  Note that the compositions along the top and bottom rows are identity
  morphisms.  Thus the cofiber of $K(L)$ splits as the wedge of the
  cofiber of $\nabla_\beta$ with the cofiber of
  \[\bigvee_{\alpha\in \ell^{-1}(\beta)} (\makeshort{\hocofib K(\S) \rto
    K(\S_{\Aut(\alpha)})}) \rto (\makeshort{\hocofib K(\S) \rto K(\S_{\Aut(\beta)})}).\]
  The desired statement follows from the observation that
  $\hocofib (K(\S) \rto K(\S_G)) \simeq \Sigma^\infty BG$.
\end{proof}

This is a much more complicated splitting result than the one we got in
Theorem~\ref{thm:graded}, but it turns out that we can still write down its
$\pi_0$ and $\pi_1$.  For simplicity, we write
\[\tilde C_\beta \defeq  \hocofib\bigvee_{\alpha\in
  \ell^{-1}(\beta)} \Sigma^\infty B\Aut(\alpha) \rto \Sigma^\infty
B\Aut(\beta),\]
so that $C_\beta = (\hocofib \nabla_\beta) \vee \tilde C_\beta$.  

First we compute $\pi_i(\hocofib\nabla_\beta)$ for $i=0,1$.  If
$\ell^{-1}(\alpha) = \eset$, then $\hocofib \nabla_\beta = \S$, so $\pi_0 = \Z$
and $\pi_1 = \Z/2$.  On the other hand, if $\ell^{-1}(\alpha) \neq 0$, then
$\pi_0(\nabla_\beta): \Z^{\oplus \ell^{-1}(\beta)} \rto \Z$ is addition, and in
particular is surjective; therefore $\pi_0 = 0$.  Thus we see that
\[\pi_0 \hocofib \nabla_\beta \cong
\begin{cases}
  \Z & \hbox{if }\ell^{-1}(\beta) = \eset \\
  0 & \hbox{otherwise.}
\end{cases}
.\] Now consider $\pi_1$.  The map $\nabla_\beta$ has a section, so in fact
$\hocofib \nabla_\beta \simeq \bigvee_{|\ell^{-1}(\beta)|-1} \Sigma\S$.  Of
course, $\ell^{-1}(\beta)$ may be infinite, so the indexing on the sum doesn't
quite make sense.  However, we can still clearly identify what $\pi_1 \hocofib
\nabla_\beta$ is: it is the subgroup of $\Z^{\oplus \ell^{-1}(\beta)}$
consisting of those formal sums
$a_1\alpha_1 +\cdots + a_n\alpha_n$
such that $a_1+\cdots+a_n = 0$.  Since
this is a subgroup of ``codimension'' $1$ the indexing is justified from that
perspective.  Thus in general, $\pi_1(\hocofib \nabla_\beta) \cong \tilde
\Z^{\oplus \ell^{-1}(\beta)}$, justifying our use of this notation.

To compute $\pi_0$ and $\pi_1$ of $\tilde C_\beta$ we use the long exact
sequence in homotopy.  Then we have a long exact sequence
\[\bigoplus_{\alpha\in \ell^{-1}(\beta)} \Aut(\alpha)^\ab \rto^\ell \Aut(\beta)^\ab \rto
\pi_1\tilde C_\beta \rto 0 \rto 0 \rto \tilde C_\beta \rto 0.\]
Thus we conclude that $\pi_0\tilde C_\beta = 0$ and 
\[\pi_1\tilde C_\beta = \Aut(\beta)^\ab \left/ \iota\bigg(\bigoplus_{\alpha\in
    \ell^{-1}(\beta)} \Aut(\alpha)^\ab\bigg)\right.\] Note that even though
$\Aut(\alpha) \rto \Aut(\beta)$ is only well-defined up to conjugation this
formula is independent of those choices.  Notice, also, that $\pi_1\tilde
C_\beta$ is a quotient of $\Aut(\beta)$, so any element in $\pi_1\tilde C_\beta$
can be represented by a birational automorphism of $X_\beta$.

Using Proposition~\ref{prop:Cbetacofib} we can write down the first two columns
of $\tilde E^1_{*,*}$, which appear in Figure~\ref{fig:ss-kv/l}.
\begin{figure}[h]
  \centering
  \begin{tikzpicture}[xscale=8]
    \node (OO) at (0,0) {$\Z^{\oplus B_0}$};
    \node (Odots) at (0,1) {$\vdots$};
    \node (On-1) at (0,2) {$\Z^{\oplus B_{n-1} \bs \ell(B_{n-1})}$};
    \node (On) at (0,3) {$\Z^{\oplus B_n\bs \ell(B_{n-1})}$};
    \node (FO) at (1,0) {$\bigoplus_{\beta\in B_0} \Aut(\beta)^\ab \oplus \Z/2$};
    \node (Fdots) at (1,1) {$\vdots$};
    \node (Fn-1) at (1,2) {$\bigoplus_{\beta\in
        B_{n-1}}  \pi_1\tilde C_\beta \oplus  \tilde\Z^{\oplus \ell^{-1}(\beta)}$};
    \node (Fn) at (1,3) {$\bigoplus_{\beta\in B_n} \pi_1\tilde C_\beta \oplus
      \tilde\Z^{\oplus \ell^{-1}(\beta)} $};
    
    \node (pi0) at (0,-1) {$\pi_0$};
    \node (pi1) at (1,-1) {$\pi_1$};
    \draw (OO.south west) +(-.1,-.1) to[draw] +(1.5,0);
    \draw[->] (Fn.west) -- node[auto,swap] {$d_1$} (On-1.east);
    \draw[->, densely dotted] (Fn.west) -- node[below right] {$d_{n}$} (OO.east);
  \end{tikzpicture}

  \caption{Spectral sequence for $K((\V/L).)$}
  \label{fig:ss-kv/l}
\end{figure}

\begin{lemma} \lbl{lem:partial-comp}
  Let $\beta$ be in $B_n$.  
  \begin{itemize}
  \item[(1)] For every birational automorphism $\varphi \in \Aut(\beta)$, the
    homomorphism 
    \[\partial: \pi_1\tilde C_\beta \leq \tilde E^1_{1,n} \rto \tilde
    A_{0,n-1}\]
    is given by
    \[\partial[\varphi] = [X_\beta \bs V] - [X_\beta\bs U]\]
    if $\varphi$ is represented as an isomorphism $U \rto V$ of open subsets of
    $X_\beta$.

  \item[(2)] If $\ell^{-1}(\beta) = \eset$ then $\partial(\Z/2) = 0$.  

  \item[(3)] Suppose that $\ell^{-1}(\beta) \neq \eset$.  The
    homomorphism
    \[\partial: \pi_1\hocofib \nabla_\beta \leq \tilde E^1_{1,n} \rto \tilde A_{0,n-1}\]
    is given by
    \[\partial([\alpha]-[\alpha']) = [(X_{\alpha'}\times \A^1)\bs V] -
    [(X_{\alpha} \times \A^1)\bs U]\] for any formal difference
    $[\alpha]-[\alpha']$ and for any birational isomorphism $\varphi:
    X_\alpha\times \A^1 \rdash X_{\alpha'} \times \A^1$ represented by an
    isomorphism $U \rto V$.
  \end{itemize}
\end{lemma}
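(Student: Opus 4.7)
The plan is to handle parts (1) and (2) by naturality against the $K(\V)$ spectral sequence, and to handle part (3) by constructing an explicit representative in the simplicial assembler $(\V^{(n)}/L)/\tilde\iota$ and applying the boundary formula underlying Lemma~\ref{lem:1st-partial-comp}.

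For (1), observe that diagram~(\ref{diag:filtspec}) exhibits a morphism of filtered spectra $K(\V^{(n)}) \to K(\V^{(n)}/L)$, hence a morphism from the spectral sequence~(\ref{eq:kvss}) to~(\ref{eq:kv/lss}). Unwinding the splitting constructed in the proof of Proposition~\ref{prop:Cbetacofib}, the induced map on $E^1_{1,n}$ sends the summand $\Sigma^\infty B\Aut(\beta) \subseteq K_1(\V^{(n,n-1)})$ to $\tilde C_\beta \subseteq \pi_1 C_\beta$ via the quotient projection, so $[\varphi] \in \Aut(\beta)^\ab$ is carried to $[\varphi] \in \pi_1 \tilde C_\beta$. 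Combining naturality of $\partial$ with Lemma~\ref{lem:1st-partial-comp} then expresses the boundary as the image of $[X_\beta \bs V] - [X_\beta \bs U]$ under $K_0(\V^{(n-1)}) \to K_0(\V^{(n-1)}/L)$, which by construction is represented by the same formal difference. Part~(2) follows from the same naturality argument specialized to the $\Z/2$ summand: when $\ell^{-1}(\beta)=\eset$, the splitting of Proposition~\ref{prop:Cbetacofib} identifies this $\Z/2$ with the $\Z/2$ summand of $\pi_1 \Sigma^\infty_+ B\Aut(\beta)$ coming from the sphere factor, and Remark~\ref{rem:z20} already records that $\partial=0$ on that class in the $K(\V)$ spectral sequence.

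Part (3) is the main obstacle, because $[\alpha]-[\alpha'] \in \pi_1 \hocofib \nabla_\beta$ is in general not in the image of $K_1(\V^{(n,n-1)}) \to \pi_1 C_\beta$, so naturality alone is inadequate. I would instead build an explicit assembler-level representative as follows. Fix a birational isomorphism $\varphi: X_\alpha \times \A^1 \rdash X_{\alpha'} \times \A^1$ obtained by composing birational identifications of both sides with $X_\beta$, realized as an isomorphism $U \rto V$ of dense opens. Using \cite[Corollary 3.10]{Z-ass-pi1}, write down a $\pi_1$-representative in $(\V^{(n)}/L)/\tilde\iota$ that exhibits $U\cong V$ as a dense open in both $X_\alpha \times \A^1$ and $X_{\alpha'}\times \A^1$, routing the two complements through the degenerate $1$-cells supplied by $L$ in the mapping-cone construction of Definition~\ref{def:simp-cofib}. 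The boundary formula \cite[Proposition 3.13]{Z-ass-pi1} then reads off exactly $[(X_{\alpha'}\times \A^1)\bs V] - [(X_\alpha\times \A^1)\bs U]$ in $K_0(\V^{(n-1)}/L)$. The nontrivial verification is that this representative really corresponds to $[\alpha]-[\alpha']$ under the splitting from Proposition~\ref{prop:Cbetacofib}; this is a diagram chase in the second half of that proof, matching the fold-map component of $C_\beta$ with the difference between the two chosen embeddings of $U\cong V$ into $X_\beta$.
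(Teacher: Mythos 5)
Your proposal is correct and follows essentially the same route as the paper: parts (1) and (2) by lifting explicit covering-family representatives from the $K(\V)$ side (the paper phrases this via the identification $(\V^{(n)}/L)/\tilde\iota \cong (\V^{(n)}/\iota)/\tilde L$ and representatives in $K_1(\V^{(n)}/\iota)$, which is the same naturality you invoke), and part (3) by an explicit representative in the mapping-cone assembler whose boundary is read off from \cite[Remark 3.9, Proposition 3.13]{Z-ass-pi1}. The one verification you flag as nontrivial in (3) is handled in the paper by checking that the representative maps to $[\alpha]-[\alpha']$ under the boundary $K_1((\V^{(n)}/\iota)/\tilde L)\rto K_0(\V^{(n-1)}/\iota)\cong\Z^{\oplus B_{n-1}}$, which is exactly the identification defining $\pi_1\hocofib\nabla_\beta\cong\tilde\Z^{\oplus\ell^{-1}(\beta)}$, so your diagram chase reduces to that one-line check.
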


\begin{proof}
  \noindent
  Proof of (1): The group $\tilde E^1_{1,n}$ is $K_1((\V^{(n)}/L)/\tilde\iota)$.
    Since $(V^{(n)}/L)/\tilde\iota \cong (V^{(n)}/\iota)/\tilde L$, to compute
    representatives for its elements it suffices to compute representatives for
    the elements of $K_1((\V^{(n)}/\iota)/\tilde L)$ and check that they
    represent the correct classes.  Since we obtained our calculation of these
    groups from the long exact sequence for the cofiber sequence
    \[K(\V^{(n-1)}/\iota) \rto K(\V^{(n)}/\iota) \rto K((\V^{(n)}/\iota)/\tilde
    L)\] it suffices to check that they are correct by checking them in this
    sequence; thus to represent $[\varphi]$ it suffices to construct a
    representative of $[\varphi]$ in $K_1(\V^{(n)}/\iota)$ and then compute its
    image in $K_1((\V^{(n)}/\iota)/\tilde L)$.

    By using the model of the cofiber constructed in Theorem~\ref{thm:cofiber}
    together with \cite[Corollary 3.10]{Z-ass-pi1} we know that the following data
    represents an element of $E^1_{1,n}$.  Let $\beta$ be in $B_n$ and let $\varphi$
    be a birational automorphism of $X_\beta$, represented by an isomorphism $U
    \rto V$ of open subsets of $X_\beta$.  Then a representative of the class of
    $\varphi$ in $\pi_1\tilde C_\beta$ is given by the covering families
    \[\makeshort{\{U \rto X_\beta\} \cup \{X_\beta \bs U \rto X_\beta\}\quad\hbox{and}
    \quad \{U \rto^{\cong} V \rto X_\beta\} \cup \{X_\beta \bs V \rto
    X_\beta\}}.\]
    Thus
    $\partial[\varphi] = [X_\beta \bs V] - [X_\beta \bs U]$.

    \noindent 
    Proof of (2):  As in part (1), we can construct a representative for the nonzero
    element of $\Z/2$ by finding one in $K_1(\V^{(n)}/\iota)$ and then
    considering its image.  Therefore the nonzero element of $\Z/2$ is
    represented by two copies of $X_\beta$ being swapped, and thus has two
    complete covering families.  Therefore $\partial$ on it is zero.

    \noindent 
    Proof of (3): To find a representative of $[\alpha]-[\alpha']$ it suffices to
    construct an element which maps to $[\alpha]-[\alpha']$ in
    $K_0(\V^{(n-1)}/\iota)$ under $\partial$.  Thus we can represent it
    $[\alpha]-[\alpha']$ by the following data: varieties $X_\alpha$ and
    $X_{\alpha'}$, and a birational isomorphism $\varphi: X_\alpha\times\A^1
    \rdash X_{\alpha'} \times \A^1$ given as an isomorphism $U \rto V$.  Then by
    \cite[Remark 3.9]{Z-ass-pi1} the corresponding element of $\pi_1$ is
    represented by the covering families
    \[\makeshort{\{U \rto (X_\alpha\times \A^1)\sqcup Z_{\alpha'}, Z_\alpha \rto
      (X_\alpha\times \A^1)\sqcup
      Z_{\alpha'}, Z_{\alpha'} \rto (X_\alpha\times \A^1) \sqcup Z_{\alpha'}\}}\]
    and
    \[\makeshort{\{U \rto^{\cong} V \rto (X_{\alpha'}\times \A^1)\sqcup Z_{\alpha}, Z_\alpha \rto
      (X_{\alpha'}\times \A^1)\sqcup
      Z_{\alpha}, Z_{\alpha} \rto (X_{\alpha'}\times \A^1) \sqcup Z_{\alpha}\},}\]
    where $Z_\alpha = (X_\alpha\times\A^1) \bs U$ and $Z_{\alpha'} =
    (X_{\alpha'}\times \A^1) \bs V$.\footnote{Rephrasing this in the notation
      of \cite[Remark 3.9]{Z-ass-pi1}, we use $B_0 = 0$, $C_0 = \{U, Z_\alpha,
      Z_{\alpha'}\}$, $V_1 = (\{X_\alpha\times \A^1,Z_{\alpha'}\},
      \{X_{\alpha'}\}, \{Z_\alpha\},0)$ and $W_1 = (\{X_{\alpha'} \times \A^1,
      Z_\alpha\}, \{X_\alpha\}, \{Z_{\alpha'}\},0)$.}  Then
    $\partial([\alpha]-[\alpha']) = [Z_{\alpha'}]-[Z_\alpha]$.
\end{proof}

\begin{proposition} \lbl{prop:ssmap} The map $K(\V) \rto K(\V/L)$ induces a map
  of spectral sequences $E^*_{p,q} \rto \tilde E^*_{p,q}$.  This map is
  surjective on $\tilde E^1_{0,n}$ and surjective onto the component
  $\bigoplus_{\beta\in B_n} \pi_1\tilde C_\beta$ in $\tilde E^1_{1,n}$.
\end{proposition}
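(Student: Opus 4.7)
The plan is to obtain the map of spectral sequences from the map of filtered spectra in~(\ref{diag:filtspec}), identify its effect on $E^1$ summand-by-summand using compatible splittings, and then read off surjectivity.

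Diagram~(\ref{diag:filtspec}) displays $K(\V) \rto K(\V/L)$ as a map of filtered spectra, and therefore automatically induces the desired map of spectral sequences $E^r_{p,q} \rto \tilde E^r_{p,q}$. On the $E^1$-page at column $n$, this is the map on cofibers
\[g^{(n)}: K(\V^{(n,n-1)}) \rto \hocofib\bigl(K(\V^{(n-1)}/L) \rcofib K(\V^{(n)}/L)\bigr),\]
whose source is $\bigvee_{\beta\in B_n} \Sigma_+^\infty B\Aut(\beta)$ by Theorems~\ref{thm:cofib} and~\ref{thm:graded} and whose target is $\bigvee_{\beta \in B_n} C_\beta$ by Proposition~\ref{prop:Cbetacofib}.

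Next I would verify that $g^{(n)}$ decomposes as a wedge over $\beta$ of maps $g_\beta: \Sigma_+^\infty B\Aut(\beta) \rto C_\beta$. Both the source and target splittings are built by restricting to the subassemblers $\V^{(n,n-1)}|_\beta$ and then passing through the equivalences $\V^{(n,n-1)}|_\beta \simeq \C_{X_\beta} \simeq \S_{\Aut(\beta)}$ used in the proof of Proposition~\ref{prop:Cbetacofib}; under these identifications $g_\beta$ is the cofiber projection $K(\S_{\Aut(\beta)}) \rto \hocofib\bigl(\bigvee_{\alpha\in \ell^{-1}(\beta)} K(\S_{\Aut(\alpha)}) \rto K(\S_{\Aut(\beta)})\bigr)$. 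The retract square at the end of that proof splits this cofiber as $\hocofib \nabla_\beta \vee \tilde C_\beta$, and under the corresponding splitting $\Sigma_+^\infty B\Aut(\beta) \simeq \S \vee \Sigma^\infty B\Aut(\beta)$ from Remark~\ref{rem:z20}, $g_\beta$ becomes the wedge of the two elementary cofiber projections $\S \rto \hocofib \nabla_\beta$ and $\Sigma^\infty B\Aut(\beta) \rto \tilde C_\beta$.

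Surjectivity then falls out on each factor. On $\pi_0$, the first cofiber projection is the identity $\Z \rto \Z$ when $\ell^{-1}(\beta) = \eset$ and the zero map $\Z \rto 0$ otherwise, while the second has target $\pi_0 \tilde C_\beta = 0$; summing over $\beta$ gives the projection $\Z^{\oplus B_n} \twoheadrightarrow \Z^{\oplus B_n \bs \ell(B_{n-1})}$ that forgets the summands indexed by $\ell(B_{n-1})$, which is surjective. On $\pi_1$, the second cofiber projection induces the quotient $\Aut(\beta)^\ab \twoheadrightarrow \pi_1 \tilde C_\beta$ coming from precisely the long exact sequence used to compute $\pi_1 \tilde C_\beta$ in the first place; summing over $\beta$ shows the image of $E^1_{1,n}$ contains every summand $\pi_1 \tilde C_\beta$, as required. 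The main obstacle is the second step---verifying that $g^{(n)}$ genuinely decomposes as this wedge of elementary cofiber projections. This is a naturality check for the retract square at the end of the proof of Proposition~\ref{prop:Cbetacofib}, but once that bookkeeping is done the surjectivity statements in the third step are immediate.
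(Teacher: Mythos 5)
Your proposal is correct, and its skeleton (map of filtered spectra $\Rightarrow$ map of spectral sequences, then analyze the induced map on $E^1$) matches the paper's. Where you differ is in how the two surjectivity claims are established, and the comparison is instructive. For surjectivity on $\tilde E^1_{0,n}$ the paper avoids your splitting computation entirely: it uses the soft fact that for any map of connective spectra $f:X \rto Y$ the map $\pi_0 Y \rto \pi_0\hocofib f$ is surjective, applied to the filtration quotients; this sidesteps the ``naturality of the retract square'' bookkeeping you correctly flag as the main obstacle, at the cost of not identifying the map explicitly (your version, which exhibits it as the projection $\Z^{\oplus B_n}\twoheadrightarrow \Z^{\oplus B_n\bs\ell(B_{n-1})}$, gives more information than is needed here but is consistent with the computations in Section~5). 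For surjectivity onto $\bigoplus_\beta \pi_1\tilde C_\beta$ the paper argues at the level of representatives --- the class of a birational automorphism $\varphi$ in $K_1(\V)$ maps to the class of $\varphi$ in $K_1(\V/L)$, as already noted in the proof of Lemma~\ref{lem:partial-comp}, and every element of $\pi_1\tilde C_\beta$ is represented by such a $\varphi$ since $\pi_1\tilde C_\beta$ is a quotient of $\Aut(\beta)^\ab$ --- whereas you package the same content as the statement that $g_\beta$ restricts to the cofiber projection $\Sigma^\infty B\Aut(\beta)\rto \tilde C_\beta$, which on $\pi_1$ is exactly that quotient map. Both routes are sound; yours front-loads a compatibility check between the splittings of Theorem~\ref{thm:graded} and Proposition~\ref{prop:Cbetacofib} (which does hold, since both are built through the same chain of equivalences $\V^{(n,n-1)}|_\beta \simeq \C_{X_\beta}\simeq \S_{\Aut(\beta)}$ once representatives are chosen consistently), while the paper's representative-level and connectivity arguments make the proposition essentially formal.
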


\begin{proof}
  The first part follows directly from the fact that this map gives rise to a
  map of filtered spectra.  The statement about $\tilde E^1_{0,n}$ is true for
  any such spectral sequence arising from a connective spectrum filtered by
  connective spectra, since it follows from the fact that for any map of
  connective spectra $f:X \rto Y$, $\pi_0 Y \rto \pi_0 \hocofib f$ is
  surjective.  The last statement follows by checking that in the map $K_1(\V)
  \rto K_1(\V/L)$ the class of a birational automorphism $\varphi$ maps to
  itself, as mentioned in the proof of Lemma~\ref{lem:partial-comp}.
\end{proof}

\section{Restricting to convenient fields} \lbl{sec:convenient}

All of the results mentioned so far in the paper are independent of the field
$k$ and have thus contained rather little algebraic geometry.  We now want to
compute some of the differentials in the spectral sequence for $K(\V/L)$.  For
this we need to restrict our attention to those fields where the
computations are feasible.

\begin{definition} \lbl{def:convenient}
  A birational isomorphism $\varphi:
  X \rdash Y$ between smooth projective varieties $X$ and $Y$ of dimension $n$
  over $k$ is \textsl{convenient} if 
  \[[X \bs U] - [Y \bs V] \in \mathrm{im}\Big(K_0(\V^{(n-2)}) \rto^{\cdot \LL}
  K_0(\V^{(n-1)})\Big).\] Here $U\subseteq X$ and $V\subseteq Y$ are open and
  $\varphi$ is an isomorphism $U \rto V$.  

  A field $k$ is \textsl{convenient} if all birational isomorphisms between
  smooth projective varieties over $k$ are convenient.
\end{definition}

A priori, it is not obvious that the definition of a convenient birational
isomorphism is only dependent on the birational isomorphism and not on the
choices of $U$ and $V$.  In order to check that this is not the case it suffices
to check that if $\varphi$ is convenient when represented as an isomorphism $U
\rto V$ then it is also convenient when represented as an isomorphism $U' \rto
V'$ for any open $U' \subseteq U$.  To see this, observe that 
\begin{eqnarray*}
  [X \bs U'] - [Y \bs V'] &=& [X \bs U] + [U \bs U'] - [Y \bs V]  - [V \bs V']
  \\
  &=& \underbrace{[X \bs U] - [Y \bs V]}_{\in \mathrm{im}(\cdot \LL)} +
  \underbrace{[U \bs U'] - [V \bs V']}_{=0}.
\end{eqnarray*}
The equality $[U \bs U'] = [V \bs V']$ is because $\varphi$ restricts to an
isomorphism $U \bs U' \rto V \bs V'$.

All computations in this section are going to be done in convenient fields.
Currently, the only fields known to be convenient are fields with characteristic
$0$.  However, as the proof of this (see below) relies only on the Weak
Factorization Theorem, any field for which the Weak Factorization Theorem is
known is convenient.  We introduce this definition to highlight that this is the
only property needed for all computations in this section.

\begin{lemma} 
  Fields with characteristic $0$ are convenient.
\end{lemma}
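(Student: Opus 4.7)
The plan is to invoke the Weak Factorization Theorem of Abramovich--Karu--Matsuki--W{\l}odarczyk: over a characteristic-zero field, any birational map between smooth projective varieties of dimension $n$ factors as a finite chain of blow-ups and inverses of blow-ups along smooth centers of codimension at least $2$, with all intermediate varieties smooth projective of dimension $n$. Granting this, it suffices to prove (i) that convenience is preserved under composition of birational maps, and (ii) that each single blow-up (and its inverse) is itself convenient.

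For (ii), let $\pi: Y \to X$ blow up a smooth subvariety $Z \subset X$ of codimension $c \geq 2$, with exceptional divisor $E = \pi^{-1}(Z)$. Taking $U = X \bs Z$ and $V = Y \bs E$, I have $[X \bs U] - [Y \bs V] = [Z] - [E]$. Since $E = \P(N_{Z/X})$ is a Zariski-locally trivial $\P^{c-1}$-bundle, the standard stratification argument (cover $Z$ by opens trivializing $N_{Z/X}$ and apply additivity) gives $[E] = [Z](1 + \LL + \cdots + \LL^{c-1})$, so that
\[[X \bs U] - [Y \bs V] = -\LL \cdot [Z] \cdot (1 + \LL + \cdots + \LL^{c-2}).\]
The bracketed factor has total dimension at most $(n-c) + (c-2) = n-2$ since $\dim Z = n-c$, placing the difference in $\LL \cdot K_0(\V^{(n-2)})$ as required; the inverse map produces the same expression with opposite sign.

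For (i), I would represent $\varphi_i: X_{i-1} \dashrightarrow X_i$ as an isomorphism $U_i \to V_i$ (for $i=1,2$), set $W = V_1 \cap U_2$, and take $U = \varphi_1^{-1}(W)$ and $V = \varphi_2(W)$, so that $\varphi_2 \circ \varphi_1$ restricts to an isomorphism $U \to V$. A brief scissor computation in $K_0(\V^{(n-1)})$---using that $\varphi_1$ identifies $U_1 \bs U$ with $V_1 \bs W$, that $\varphi_2^{-1}$ identifies $V_2 \bs V$ with $U_2 \bs W$, and that the partition of $X_1 \bs W$ into either $X_1 \bs V_1$ and $V_1 \bs W$ or $X_1 \bs U_2$ and $U_2 \bs W$ yields $[V_1 \bs W] - [U_2 \bs W] = [X_1 \bs U_2] - [X_1 \bs V_1]$---should produce the telescoping identity
\[[X_0 \bs U] - [X_2 \bs V] = ([X_0 \bs U_1] - [X_1 \bs V_1]) + ([X_1 \bs U_2] - [X_2 \bs V_2]),\]
whose two summands lie in $\LL \cdot K_0(\V^{(n-2)})$ by the assumed convenience of $\varphi_1$ and $\varphi_2$. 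The main subtlety I expect is not any single calculation but the bookkeeping around the definitional flexibility of convenience: in both the composition step and in the iteration supplied by Weak Factorization, the opens on which successive isomorphisms are defined must be shrunk to a common subopen, and it is essential (as observed in the remarks after Definition~\ref{def:convenient}) that convenience is invariant under such shrinking. Once that invariance is in hand, the telescoping identity and the blow-up computation combine to complete the proof.
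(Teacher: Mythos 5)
Your proposal is correct and follows essentially the same route as the paper: Weak Factorization, induction on the number of blow-ups/blow-downs, the projective-bundle computation $[Z]-[E]=-\LL[Z](1+\LL+\cdots+\LL^{c-2})$ for the base case, and a telescoping identity (after shrinking to a common open $W$) for composition. The only cosmetic difference is that the paper first treats the special case where the two opens already match and then reduces to it by shrinking, whereas you carry out the shrinking and the scissor computation in one step; both rely on the same invariance-under-shrinking observation made after Definition~\ref{def:convenient}.
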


\begin{proof}
  If $k$ has characteristic $0$ then the Weak Factorization Theorem holds in $k$
  \cite[Section 5.20]{wlodarczyk09}.  Thus any birational isomorphism $\varphi:
  X \rdash Y$ can be factored as a composition of blow-ups and blow-downs along
  smooth centers.  We prove that all birational isomorphisms over $k$ are
  convenient by induction on the number of blowups and blowdowns in this
  factorization.

  We first prove the base case.  Suppose that $Y$ is a blowup of $X$ along $Z$.
  Then in $K_0(\V_k^{(n)})$ we can write
  $[X] = [U] + [Z]$ and $[Y] = [U] + [Z']$, where $Z'$ is a $\P^\ell$ bundle on $Z$
  for some $1 \leq \ell \leq n-1-\dim Z$.  However, in $K_0[\V^{(\dim
    Z+\ell)}]$ and thus in $K_0[\V^{(n-1)}]$, $[Z'] = [Z\times \P^\ell]$.  Thus
  \begin{align*}
    [X \bs U] - [Y \bs U] &= [Z] - [Z\times \P^\ell] = [Z] - ([Z] + [Z]\LL +
  \cdots + [Z]\LL^\ell) \\
  &= -\LL([Z] + \cdots + [Z]\LL^{\ell-1}) \in \LL K_0(\V^{(n-2)}).
  \end{align*}
  For a blowdown the computation is the same with a reversed sign.  

  Now we prove the induction step.  Suppose that $\varphi = \varphi''\varphi'$,
  where $\varphi'$ and $\varphi''$ require fewer blowups/blowdowns than
  $\varphi$.  Represent $\varphi':X \rdash Y$ by $U \rto^{\cong} U'$ and
  $\varphi'':Y \rdash Z$ by $V \rto^{\cong} V'$.  By the induction hypothesis
  both $\varphi'$ and $\varphi''$ are convenient.

  Suppose first that $U' = V$.  Then $\varphi$ can be represented as an
  isomorphism $U \rto V'$, and we have
  \[[Z \bs V'] - [X \bs U] = ([Z \bs V'] - [Y \bs V]) + ([Y \bs U'] - [X \bs U])
  \in \mathrm{im}(\cdot \LL).\]
  Thus in this case $\varphi$ is convenient.

  Now consider the general case.  Let $W = \varphi'(U) \cap V \subseteq Y$.
  Then $\varphi'$ can be represented as an isomorphism $(\varphi')^{-1}(W) \rto
  W$ and $\varphi''$ can be represented as an isomorphism $W \rto
  \varphi''(W)$.  This reduces to the above case, and we see that $\varphi$ must
  also be convenient.
\end{proof}

\begin{definition}
  We define the \textsl{line degree} of a birational isomorphism class
  $\alpha$ in $B_n$, denoted $\deg \alpha$ to be the maximum integer $s$ such that
  $\alpha$ in $\ell^s(B_{n-s})$.  We define the equivalence relation $\sim_r$ on
  $B_n$ by defining $\alpha \sim_r \alpha'$ if $\ell^r(\alpha) =
  \ell^r(\alpha')$; we write $\alpha\sim_\infty \alpha'$ if there exists $r$
  such that $\alpha \sim_r \alpha'$.
\end{definition}

By definition, $\alpha$ and $\alpha'$ in $B_n$ are stably birational if and only if
$\alpha\sim_\infty \alpha'$.

Our first result computes the differentials in the spectral sequence for
$K(\V/L)$
in a more convenient form than that given by
Lemma~\ref{lem:partial-comp}.

\begin{lemma} \lbl{lem:nice-dcomp} Let $k$ be a convenient field, and let
  $\beta$ be in $B_n$.  Consider the spectral sequence $\tilde E^*_{p,q}$.
  \begin{itemize}
  \item[(1)]   For every
    birational automorphism $\varphi$ in $\Aut(\beta)$ and all $r \geq 1$,
    $d_r[\varphi] = 0$.
  \item[(2)] If $\ell^{-1}(\beta) = \eset$ then $d_r(\Z/2) = 0$ for all $r \geq 1$.
  \item[(3)] Suppose $\ell^{-1}(\beta) \neq \eset$.  For distinct
    $\alpha,\alpha'$ in $\ell^{-1}(\beta)$,
    \[d_r([\alpha]-[\alpha']) =
    \begin{cases}
      0 & 1\leq r \leq \min(\deg \alpha, \deg \alpha') \\
      [(\ell^{r-1})^{-1}(\alpha)] - [(\ell^{r-1})^{-1}(\alpha')] & r  =
      \min(\deg \alpha, \deg \alpha') + 1.
    \end{cases}\]
    Here, $(\ell^{r-1})^{-1}(\alpha)$ is any preimage of $\alpha$ under $\ell^{r-1}$.
  \item[(4)] For all $r\geq 1$,
    \[\tilde E^r_{0,n} \cong \Z^{\oplus (B_n/\sim_{r-1}) \bs \ell(B_{n-1})}.\]
  \item[(5)] For $\alpha,\alpha'$ as in part (3),
    $d_{\min(\deg \alpha, \deg\alpha')+1}([\alpha]-[\alpha']) \neq 0$.
  \end{itemize}
\end{lemma}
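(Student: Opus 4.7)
My plan is to dispatch parts (1) and (2) directly via Lemmas~\ref{lem:partial-comp} and~\ref{lem:partial-0}, and to establish (3), (4), (5) by simultaneous induction on $r$, using a $\P^1$-compactification trick combined with convenience to lift $\partial$-images through the dimension filtration. For part (1), I pick $X_\beta$ smooth projective (possible over a convenient field by resolution of singularities and compactification). A birational automorphism $\varphi$ of $X_\beta$ is then a birational isomorphism between smooth projective varieties, so convenience together with Lemma~\ref{lem:partial-comp}(1) give $\partial[\varphi] \in \L K_0(\V^{(n-2)})$, i.e.\ $\partial[\varphi] = 0$ in $\tilde A_{0,n-1} = K_0(\V^{(n-1)}/L)$. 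Lemma~\ref{lem:partial-0} then yields $d_r[\varphi] = 0$ for all $r\geq 1$. Part (2) is immediate from Lemma~\ref{lem:partial-comp}(2) combined with Lemma~\ref{lem:partial-0}.

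The key technical input for (3)--(5) is the following lifting lemma: if $\alpha \in B_m$ and $\alpha = \ell(\mu)$ for $\mu \in B_{m-1}$, then $[X_\alpha] = [X_\mu]$ in $K_0(\V^{(m)}/L)$. I would prove it by compactifying $X_\mu \times \A^1 \subset X_\mu \times \P^1$ (smooth projective) and applying convenience to the resulting birational isomorphism $X_\alpha \rdash X_\mu \times \P^1$; using $[X_\mu \times \P^1] = [X_\mu \times \A^1] + [X_\mu] = \L[X_\mu] + [X_\mu]$, everything collapses modulo $\L K_0(\V^{(m-1)})$ to give $[X_\alpha] = [X_\mu]$. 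Iterating $d = \deg\alpha$ times, $[X_\alpha]$ lifts along $\iota$ all the way down to $[X_{\gamma_d}] \in K_0(\V^{(n-1-d)}/L)$ for any chain of preimages terminating at $\gamma_d$ with $\ell^d(\gamma_d) = \alpha$.

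For part (3), applying the same $\P^1$-trick to $X_\alpha \times \A^1 \subset X_\alpha \times \P^1$ inside Lemma~\ref{lem:partial-comp}(3) reduces $\partial([\alpha]-[\alpha'])$ to $[X_\alpha] - [X_{\alpha'}]$ in $K_0(\V^{(n-1)}/L)$. The lifting lemma then places $[X_{\gamma_d}] - [X_{\gamma'_{d'}}]$ in $\tilde A_{0,\,n-1-\min(d,d')}$ as a preimage under $\min(d,d')$ successive $\iota$'s; unwinding the definition of higher differentials immediately gives $d_r([\alpha]-[\alpha']) = 0$ for $r \leq \min(d,d')$ and $d_{\min(d,d')+1}([\alpha]-[\alpha']) = [\gamma_d] - [\gamma'_{d'}]$. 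Part (4) then follows by induction on $r$: by part (3) applied in higher bidegree, the image of $d_r\colon \tilde E^r_{1,n+r} \rto \tilde E^r_{0,n}$ is generated by $[\gamma]-[\gamma']$ with $\gamma \sim_r \gamma'$ and at least one of $\gamma,\gamma'$ outside $\ell(B_{n-1})$; quotienting $\tilde E^r_{0,n} = \Z^{\oplus (B_n/\sim_{r-1}) \bs \ell(B_{n-1})}$ by these relations coarsens $\sim_{r-1}$ to $\sim_r$ and kills exactly the $\sim_r$-classes meeting $\ell(B_{n-1})$.

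For part (5), WLOG $d = \deg\alpha = \min(\deg\alpha,\deg\alpha')$: any preimage $\gamma$ of $\alpha$ under $\ell^d$ satisfies $\gamma\notin \ell(B_{n-2-d})$ (else $\alpha \in \ell^{d+1}(B_{n-2-d})$, contradicting $\deg\alpha = d$), and by the same argument $\gamma$ is not $\sim_d$-related to any element of $\ell(B_{n-2-d})$, so $[\gamma]$ is a nonzero generator of $\tilde E^{d+1}_{0,n-1-d}$. If $\deg\alpha' > d$ one can choose $\gamma' \in \ell(B_{n-2-d})$ so that $[\gamma']=0$, and if $\deg\alpha' = d$ then $\gamma \not\sim_d \gamma'$ because $\alpha\neq\alpha'$; either way $[\gamma]-[\gamma']\neq 0$. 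The main obstacle I anticipate across the induction is the interleaving of (3) and (4): part (3) at step $r$ controls the image of $d_r$, while part (4) at step $r$ describes the ambient group $\tilde E^r_{0,n}$, and these must be advanced in lockstep so that the output of (3) can be read inside the group described by (4).
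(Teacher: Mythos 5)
Your proposal is correct and follows essentially the same route as the paper: (1) and (2) via Lemma~\ref{lem:partial-comp} plus Lemma~\ref{lem:partial-0}, (3) via the $\P^1$-compactification of $X_\alpha\times\A^1$ and convenience to reduce $\partial([\alpha]-[\alpha'])$ to $[X_\gamma]-[X_{\gamma'}]$ modulo $\LL K_0(\V^{(n-2)})$, and (4)/(5) by the same paired induction on $r$ (your ``lifting lemma'' is just the paper's inline computation $[X_\gamma\times\P^s]\equiv[X_\gamma]$ packaged separately). Your treatment of (5) is in fact slightly more careful than the paper's, since you explicitly rule out the degenerate case where both $[\gamma]$ and $[\gamma']$ vanish individually in $\tilde E^r_{0,n-r}$.
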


In particular, the formula in (3) and the statement in (5) tell us that the
length of a differential killing a difference $[\alpha]-[\alpha']$ gives us the
``minimal stability degree'' of $\alpha$ and $\alpha'$: if $d_r$ hits
$[\alpha]-[\alpha']$ then the birational isomorphism classes $\alpha$ and
$\alpha'$ are birational after multiplication with $\P^r$ but not after
multiplication with $\P^{r-1}$.

\begin{proof}
  \noindent
  Proof of (1): The differentials in the spectral sequence are obtained from the
  boundary map in the long exact sequence.  Thus to show that $d_r[\varphi] = 0$
  for all $r$ it suffices to show that $\partial [\varphi]$ is $0$, where
  $\tilde\iota: \V^{(n-1)}/L \rto \V^{(n)}/L$ and $\partial$ is the connecting
  homomorphism
  \[ K_1((\V^{(n)}/L)/\tilde\iota) \rto^\partial K_0(\V^{(n-1)}/L)\] in the long
  exact sequence in homotopy of the cofiber sequence
  \[K(\V^{(n-1)}/L) \rto K(\V^{(n)}/L) \rto K((\V^{(n)}/L)\tilde\iota).\] From
  Lemma~\ref{lem:partial-comp} and the fact that $k$ is convenient we know that
  \[\partial[\varphi] = [X_\beta\bs V] - [X_\beta \bs U]\]
  is in $\LL K_0(\V^{(n-2)})$.
  However, 
  \[K_0(\V^{(n-1)}/L) \cong K_0(\V^{(n-1)}) / \LL K_0(\V^{(n-2)}),\]
  so we see that $\partial[\varphi] =0$.

  \noindent
  Proof of (2): By Lemma~\ref{lem:partial-0} it suffices to show that
  $\partial(\Z/2) = 0$.  This is proved in Lemma~\ref{lem:partial-comp}(2).

  \noindent
  Proof of (3): From lemma~\ref{lem:partial-comp}, we know that 
    \[\partial([\alpha]-[\alpha']) = [(X_{\alpha'}\times\A^1)\bs V] -
    [(X_\alpha\times \A^1) \bs U]\] for any birational isomorphism $\varphi:
    X_\alpha\times \A^1 \rdash X_{\alpha'} \times \A^1$.  Note that we can think
    of $\varphi$ as a birational isomorphism $X_\alpha\times\P^1 \rdash
    X_{\alpha'} \times\P^1$; then since $k$ is convenient we know that 
    \[[(X_{\alpha'}\times \P^1) \bs V] - [(X_{\alpha}\times \P^1)\bs U]\] is in $\LL
    K_0(\V^{(n-2)})$.  Since 
    $(X_{\alpha'} \times \P^1) \bs V = ((X_{\alpha'}\times \A^1) \bs V) \uplus
    X_{\alpha'}$
    we conclude that in $K_0(\V^{(n-1)})$
    \begin{eqnarray*}
      && [(X_{\alpha'}\times \A^1) \bs V] - [(X_\alpha\times \A^1)\bs U] 
      \equiv [X_\alpha] - [X_{\alpha'}] \pmod{\LL K_0(\V^{(n-2)})}.
    \end{eqnarray*}
    Thus we have shown that 
    $\partial([\alpha]-[\alpha']) = [X_\alpha]-[X_{\alpha'}]$.
    Now suppose that there exists an $s$ such that $\alpha = \ell^s(\gamma)$ and
    $\alpha' = \ell^s(\gamma')$.  Since $X_\alpha$, $X_{\alpha'}$ are
    arbitrary smooth projective representatives of the birational isomorphism
    classes we could have chosen
    $X_\alpha = X_\gamma \times \P^s$ and $X_{\alpha'} = X_{\gamma'} \times
    \P^s$.
    Then 
    $[X_{\alpha}] - [X_{\alpha'}] \equiv [X_\gamma] - [X_{\gamma'}] \pmod{\LL
      K_0(\V^{(n-2)})}$. 

  \noindent
  Proof of (4,5): In order to prove (5) it suffices to prove the following
  statement for all $r$:
    \begin{itemize}
    \item[(5a)] If $\alpha \neq \alpha'$ in $\ell^{-1}(\beta)$ and
      $\min(\deg \alpha, \deg \alpha') < r$ then
      $d_{\min(\deg\alpha, \deg \alpha')+1}([\alpha]-[\alpha']) \neq 0$.
    \end{itemize}
    We prove (4) and (5a) by a paired induction on $r$.  For $r=1$, statement
    (4) is just the computation of $\tilde E^1_{0,n}$.  Statement (5a) just says
    that if one of $\alpha,\alpha'$ is not in $\ell(B_{n-1})$ then
    $[\alpha]-[\alpha']\neq 0$, which follows from the formula for $\tilde
    E^1_{0,n}$. Now suppose that both statements are true up to $r$ and consider
    $d_{r}$.  By the induction hypothesis we know that for all $m$
    \[\tilde E^{r}_{0,m} \cong \Z^{\oplus (B_m /\sim_{r-1}) \bs \ell(B_{m-1})}.\]
    Note that the free part of $\tilde E^{r}_{1,n}$ is generated by all
    $\alpha$ in $\ell^{-1}(\beta)$ such that $\deg \alpha \geq r-1$, since by (5a)
    all differences where at least one term had degree less than $r-1$ had a
    nonzero differential on it.  Thus to prove (4) we need to show that the
    cokernel of
    \[d_{r}: \bigoplus_{\substack{\beta\in B_n \\ \ell^{-1}(\beta) \neq \eset}}
    \tilde\Z^{\oplus \{\alpha\in \ell^{-1}(\beta) \,|\, \deg \alpha \geq r-1\}}
    \rto \Z^{\oplus (B_{n-r}/\sim_{r-1}) \bs \ell(B_{n-r-1})}\] is $\Z^{\oplus
      (B_{n-r}/\sim_{r}) \bs \ell(B_{n-r-1})}$.  (Note that restricting to
    those $\beta$ with nonzero preimage doesn't change the cokernel; we do this
    purely for convenience.)  For any $\beta$, suppose that
    $\alpha,\alpha'\in\ell^{-1}(\beta)$ are distinct.  If $\deg \alpha, \deg
    \alpha' > r-1$ then by (3) $d_r([\alpha]-[\alpha']) = 0$, so they do not
    influence the cokernel of $d_r$.  Thus it suffices to consider those pairs
    $\alpha,\alpha'$ with $\deg \alpha = r-1$.  

    Let $\gamma $ be in $(\ell^{r-1})^{-1}(\alpha)$ and $\gamma'$ be in
    $(\ell^{r-1})^{-1}(\alpha')$.  Note that by definition $\gamma\sim_r \gamma'$
    but $\gamma \not\sim_{r-1} \gamma'$.  On the other hand, if $\gamma\sim_r
    \gamma'$ but $\gamma \not\sim_{r-1}\gamma'$ then
    \[d_r([\ell^{r-1}(\gamma)] - [\ell^{r-1}(\gamma')]) = [\gamma]-[\gamma'],\]
    so any such pair is in the image of $d_r$.  Thus the image of $d_r$
    quotients out by $[\gamma]-[\gamma']$ for those pairs $\gamma$ such that
    $\gamma \not\sim_{r-1} \gamma'$ but $\gamma \sim_r \gamma'$, and we see that
    \[\tilde E^{r+1}_{0,n-r} = \Z^{\oplus(B_{n-r}/\sim_r) \bs \ell(B_{n-r-1})}.\]

    It remains to check (5a): if $\deg \alpha = r-1$ and $\deg \alpha' \geq r-1$
    then $d_r([\alpha]-[\alpha']) \neq 0$.  Suppose that the opposite held, so
    that $[\gamma] - [\gamma'] = 0$.  Then from (4) we know that $\gamma
    \sim_{r-1} \gamma'$, so that $\alpha = \ell^{r-1}(\gamma)  =
    \ell^{r-1}(\gamma') = \alpha'$, a contradiction.  Statement (5a)
    for $r$ follows.
\end{proof}

Lemma~\ref{lem:nice-dcomp} directly implies the following:

\begin{corollary} \lbl{cor:1-surj}
  The homomorphism $E^1_{1,n} \rto \tilde E^1_{1,n}$ is surjective on permanent cycles.
\end{corollary}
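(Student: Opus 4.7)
The plan is to identify the permanent cycles of $\tilde E^1_{1,n}$ explicitly and then to verify that each one lies in the image of the map $\phi\colon E^1_{1,n}\to\tilde E^1_{1,n}$ from Proposition~\ref{prop:ssmap}. Parts (1) and (2) of Lemma~\ref{lem:nice-dcomp} show at once that the subgroup $\bigoplus_{\beta\in B_n}\pi_1\tilde C_\beta$ consists of permanent cycles, as does each $\tilde\Z^{\oplus\eset}=\Z/2$ summand indexed by a $\beta$ with $\ell^{-1}(\beta)=\eset$. The main task is therefore to rule out nonzero permanent cycles in the remaining piece $\bigoplus_{\beta\,:\,\ell^{-1}(\beta)\neq\eset}\tilde\Z^{\oplus\ell^{-1}(\beta)}$.

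To this end I will argue inductively that any nonzero $\chi=\sum_i n_i[\alpha_i]\in\tilde\Z^{\oplus\ell^{-1}(\beta)}$ that survives to $\tilde E^r_{1,n}$ must be supported on $\{\alpha_i : \deg\alpha_i \geq r-1\}$. The key computation, carried out by expanding $\chi$ in the basis $\{[\alpha_i]-[\alpha_1]\}_{i\geq 2}$ and applying Lemma~\ref{lem:nice-dcomp}(3) term by term, should give
\[d_r\chi \;=\; \sum_{i\,:\,\deg\alpha_i=r-1} n_i\,\bigl[(\ell^{r-1})^{-1}(\alpha_i)\bigr] \;\in\; \tilde E^r_{0,n-r}\;\cong\;\Z^{\oplus(B_{n-r}/\sim_{r-1})\bs\ell(B_{n-r-1})}.\]
Indeed, terms with $\deg\alpha_i\geq r$ produce preimages $(\ell^{r-1})^{-1}(\alpha_i)$ lying in $\ell(B_{n-r-1})$ and thus vanish in the target, and distinct $\alpha_i$ of degree $r-1$ yield distinct $\sim_{r-1}$-classes (since $\gamma\sim_{r-1}\gamma'$ iff $\ell^{r-1}(\gamma)=\ell^{r-1}(\gamma')$), so these basis elements are linearly independent. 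Hence $d_r\chi=0$ forces $n_i=0$ for every $i$ with $\deg\alpha_i=r-1$, completing the inductive step. Because $\deg\alpha\leq n-1$ for every $\alpha\in B_{n-1}$, iterating this eventually forces $\chi=0$.

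Having identified the permanent cycles of $\tilde E^1_{1,n}$ as exactly $\bigoplus_\beta\pi_1\tilde C_\beta$ together with the $\Z/2$-summands where $\ell^{-1}(\beta)=\eset$, the corollary follows: Proposition~\ref{prop:ssmap} provides the surjection onto $\bigoplus_\beta\pi_1\tilde C_\beta$, while each remaining $\Z/2$-summand is hit by the corresponding $\Z/2$-summand in the $\beta$-indexed piece of $E^1_{1,n}$, since when $\ell^{-1}(\beta)=\eset$ the $\beta$-component $\Sigma_+^\infty B\Aut(\beta)\to C_\beta$ of $\phi$ is an equivalence (both the $\S$-summand of Remark~\ref{rem:z20} and the $\Sigma^\infty B\Aut(\beta)$-summand are preserved). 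The main obstacle I anticipate is the bookkeeping behind the displayed formula for $d_r\chi$: one has to expand in the chosen basis, carefully cancel out contributions from terms of degree $\geq r$, and justify that the surviving preimages in $\tilde E^r_{0,n-r}$ really do give independent basis elements, so that $d_r\chi=0$ has the claimed consequence.
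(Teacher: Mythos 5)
Your proposal is correct and follows the route the paper intends: the paper states that the corollary follows directly from Lemma~\ref{lem:nice-dcomp}, and your argument is exactly the fleshing-out of that claim — parts (1) and (2) show the $\bigoplus_\beta\pi_1\tilde C_\beta$ and empty-preimage $\Z/2$ summands are permanent cycles, parts (3)--(5) (whose proof in the paper already contains the same degree-by-degree induction you carry out) rule out nonzero permanent cycles in the $\tilde\Z^{\oplus\ell^{-1}(\beta)}$ part, and Proposition~\ref{prop:ssmap} together with the observation that $C_\beta\simeq\Sigma_+^\infty B\Aut(\beta)$ when $\ell^{-1}(\beta)=\eset$ gives surjectivity onto the permanent cycles. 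The bookkeeping you flag as the main obstacle does go through as you describe, so no gap.
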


As a quick application of Lemma~\ref{lem:nice-dcomp} we prove an extension of
Liu and Sebag's result \cite[Corollary 5(1)]{liusebag}.

\begin{corollary} \lbl{cor:ls} A variety $X$ has \textsl{stable dimensional
    complexity} $k$ if $k$ is the minimal integer such that there exists a
  variety $X'$ of dimension $k$ such that $X$ is stably birational to $X'\times
  \A^{\dim X - k}$.  If X and Y both have dimension $n$ and stable dimensional
  complexity $n$ and $[X] = [Y]$ then they are stably birational.
\end{corollary}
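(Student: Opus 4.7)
The plan is to read off the conclusion from the spectral sequence $\tilde E^*_{p,q} \Rto \pi_* K(\V/L)$ of Proposition~\ref{prop:Cbetacofib}, whose zeroth column was computed in Lemma~\ref{lem:nice-dcomp}(4). For clarity I will first assume $X$ and $Y$ are irreducible; the general case reduces to this by decomposing into top-dimensional irreducible components.

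Since $[X] = [Y]$ in $K_0[\V_k]$, the difference $[X]-[Y]$ vanishes in $K_0(\V/L) \cong K_0[\V_k]/(\LL)$. Because both $X$ and $Y$ have dimension $n$, this class lifts to $K_0(\V^{(n)}/L)$ via the natural map, and hence its image in the associated graded piece $\tilde E^\infty_{0,n} = F_n K_0(\V/L)/F_{n-1} K_0(\V/L)$ vanishes. By Lemma~\ref{lem:nice-dcomp}(4), this $E^\infty$-term is the free abelian group $\Z^{\oplus (B_n/\sim_\infty) \bs \ell(B_{n-1})}$, i.e., free on stable birational classes in dimension $n$ that are not stably birational to a variety of strictly lower dimension.

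Next I would identify the images of $[X]$ and $[Y]$ in this free abelian group. Since $X$ is irreducible, its image in $\tilde E^1_{0,n} = \Z^{\oplus B_n \bs \ell(B_{n-1})}$ is the generator $[X]\in B_n$, provided $[X]\notin \ell(B_{n-1})$. The hypothesis that $X$ has stable dimensional complexity $n$ is precisely what ensures this condition persists through the differentials: if $[X]$ were stably birational to some $X'\times\A^1$ with $\dim X' = n-1$, then the stable dimensional complexity of $X$ would be at most $n-1$. Passing from $\tilde E^1_{0,n}$ to $\tilde E^\infty_{0,n}$ only further quotients by $\sim_\infty$, so the image of $[X]$ is the basis element indexed by the stable birational class of $X$; the same reasoning applies to $Y$.

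The conclusion then follows from linear independence in the free abelian group $\tilde E^\infty_{0,n}$: the vanishing of $[[X]]-[[Y]]$ forces $[[X]] = [[Y]]$, so $X$ and $Y$ are stably birational. The main technical subtleties are the reduction to the irreducible case (where one needs to argue that stable dimensional complexity $n$ of $X$ forces every top-dimensional component to contribute a nonzero basis element in $\tilde E^\infty_{0,n}$) and appealing to spectral sequence convergence; once these are in place, the stable dimensional complexity hypothesis is exactly what prevents the images of $[X]$ and $[Y]$ from being erased during the passage from the $E^1$-page to $\tilde E^\infty_{0,n}$.
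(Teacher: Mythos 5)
Your argument is correct and is essentially the paper's own proof: both hinge on the computation $\tilde E^r_{0,n}\cong\Z^{\oplus(B_n/\sim_{r-1})\bs\ell(B_{n-1})}$ from Lemma~\ref{lem:nice-dcomp}(4), the freeness of this group, and the observation that the stable dimensional complexity hypothesis is exactly what prevents $[X]$ (and $[Y]$) from landing in the excluded set $\ell(B_{n-1})$ modulo $\sim_\infty$. The only difference is cosmetic: the paper first locates the minimal page $r$ at which $[X]=[Y]$ in the spectral sequence for $K(\V)$ and then projects to $\tilde E^r_{0,n}$, while you argue directly at $\tilde E^\infty_{0,n}$.
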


\begin{proof}
  Let $n = \dim X$.  If $X$ and $Y$ are birational then we are done.  If they
  are not birational then in the spectral sequence for $K(\V)$ we have $[X] \neq
  [Y]$ in $E^1_{0,n}$, but $[X] = [Y]$ in $E^\infty_{0,n}$.  Let $r$ be the
  minimal integer such that $[X] = [Y]$ in $E^r_{0,n}$.  Projecting down to
  $\tilde E^r_{0,n}$ we see that $[X] = [Y]$ there as well.  If $[X] \neq 0$
  then by Lemma~\ref{lem:nice-dcomp}(4) we must have $X \sim_{r-1} Y$, and $X$
  and $Y$ are stably birational, as desired.  On the other hand, if $[X] = 0$
  then by Lemma~\ref{lem:nice-dcomp}(4) there exists a birational isomorphism
  class $\alpha \in \ell(B_{n-1})$ such that $X\times \A^{r-1}\in
  \ell^{r-1}(\alpha)$; in this case $X$ has stable dimensional complexity less
  than $n$, contradicting the assumption in the corollary.  Thus this cannot
  happen and $X$ and $Y$ are stably birational, as desired.
\end{proof}

Note that this does not contradict Borisov's construction in \cite{borisov14}:
in that paper, Borisov constructs two varieties $X$ and $Y$ which are not stably
birational but which have $[X] = [Y]$.  However, the fact that $[X] = [Y]$ is
shown by having $[X] = [X'] \LL^7$ and $[Y] = [Y']\LL^7$, with no consideration
of whether $7$ is the smallest power of $\LL$ necessary for the result.  Borisov
shows that the power of $\LL$ must be at least $1$ (and therefore that
multiplication by $\LL$ has a kernel); this corollary is another proof of that
fact, since if $[X'] = [Y']$ then $X'$ and $Y'$ (and therefore $X$ and $Y$)
would have to be stably birational.

We are now ready to prove Theorem~\ref{thm:ll} over a convenient field.  


\begin{proof}[Proof of Theorem~\ref{thm:ll}]
  By Lemma~\ref{lem:nice-dcomp}(4) we know that 
  $\tilde E^\infty_{0,n} \cong \Z^{\oplus (B_n/\sim_\infty) \bs \ell(B_{n-1})}$.
  Since the groups in the $0$ column are free, this means that 
  \[K_0(\V)/(\LL) \cong K_0(\V/L) \cong \bigoplus_{n \geq 0} \tilde E^\infty_{0,n} 
  \cong \bigoplus_{n \geq 0} \Z^{\oplus (B_n/\sim_\infty) \bs \ell(B_{n-1})}.\]
  We claim that this contains one copy of $\Z$ for each stable birational
  isomorphism class.  

  From the analysis in Lemma~\ref{lem:nice-dcomp} this contains at least one
  copy of $\Z$ for each stable birational isomorphism class, since there is at
  least one at $\tilde E^1$ and the differentials only quotient out differences
  between stably birational varieties.  Thus we just need to check that at
  $E^\infty$ only one representative of each class is left.  From the formula in
  part (4) we have quotiented out by stable birational equivalence in each
  degree, so we just need to check that we also quotient out the difference in
  different degrees.

  Let $\eta$ be a stable birational isomorphism class, and let $X$ be a
  representative of this stable birational isomorphism class of minimal
  dimension; write $ m = \dim X$.  We claim that if $\alpha$ in $B_n$ satisfies
  $\ell^{n-m+r}([X]) = \ell^r(\alpha)$ then $[\alpha] = 0$ in $\tilde E^\infty_{0,n}$.
  For convenience, let us suppose that $r$ is minimal.  Then inside
  $\tilde E^1_{1,n-m+r}$ we have a nonzero element represented by
  $[\ell^{n-m+r-1}([X])] - [\ell^{r-1}(\alpha)]$, and
  \[d_{r-1}([\ell^{(n-m+r-1)}([X])] - [\ell^{r-1}(\alpha)]) =
  [\ell^{n-m}([X])] - [\alpha] = -[\alpha] \in \tilde E^{r-1}_{0,n},\]
  since $m<n$.  Thus $[\alpha] = 0$ in $\tilde E^r_{0,n}$ and therefore also in
  $\tilde E^\infty_{0,n}$.  The only representative of the stable birational
  isomorphism class $\eta$ that survives to $\tilde E^\infty$ is $[X]$.
\end{proof}

In addition, from this proof we see that every generator in $E^\infty_{0,n}$ is
a representative of a stable birational isomorphism class whose minimal
representative appears in dimension $n$.  Thus the spectral sequence for
$K(\V/L)$ keeps track not only of ``minimal stability degree'', but also
of the ``minimal dimension'' of a stable birational isomorphism class.


\begin{proof}[Proof of Theorem~\ref{thm:zero=>notinj}]
  We prove the contrapositive of this theorem: if all $\psi_n$ have trivial
  kernels then $\LL$ is not a zero divisor.

  The cofiber sequence 
  $K(\V) \rto^{K(L)} K(\V) \rto K(\V/L)$
  gives rise to a long exact sequence in homotopy
  \[\makeshort{K_1(\V) \rto K_1(\V/L) \rto K_0(\V) \rto^{\cdot \LL}
    K_0(\V).}\]
  Thus the kernel of multiplication by $\LL$ is equal to the cokernel of the map
  $K_1(\V) \rto K_1(\V/L)$.  We therefore want to analyze the cokernel of the
  map $E^\infty_{1,q} \rto \tilde E^\infty_{1,q}$.  From the computation of the
  differentials in Lemma~\ref{lem:nice-dcomp} we see that 
  \[\tilde E^\infty_{1,n} \cong \left[\bigg(\bigoplus_{\beta\in B_n} \pi_1\tilde
    C_\beta\bigg) \oplus \Z/2^{\oplus B_n \bs \ell(B_{n-1})}\right],\] where the
  square brackets denote some quotient of the given group.  By
  Corollary~\ref{cor:1-surj}  $E^1_{1,n} \rto \tilde E^1_{1,n}$ is
  surjective on this component.

  Suppose that all birational automorphisms of varieties extend to piecewise
  automorphisms.  Then by Theorem~\ref{thm:ll<=>diffs}  all
  differentials between the $1$-st and $0$-th columns of the spectral sequence
  are $0$.  Since the map between spectral sequences is surjective on all pages
  on the component of the $E^1$ page that survives to $E^\infty$, and all
  differentials out of this component are zero, the map on $E^\infty$ pages is
  also surjective, and therefore the map $K_1(\V) \rto K_1(\V/L)$ is surjective;
  thus $\cdot \LL$ is injective and $\LL$ is not a zero divisor.
\end{proof}

By doing a little bit more spectral sequence analysis, we can finally prove
Theorem~\ref{thm:kernelL}. 


\begin{proof}[Proof of Theorem~\ref{thm:kernelL}]
  Pick a representative $x$ of minimal filtration degree $n$ in the preimage of
  $\chi$ in $K_1(\V/L)$.  Then in the long exact sequence
  \[\makeshort{K_1(\V^{(n)}) \rto K_1(\V^{(n)}/L) \rto^\partial K_0(\V^{(n-1)}) \rto^{\cdot
      \LL} K_0(\V^{(n)})},\]
  write $\partial[x] = [X]-[Y]$, with $X$ and $Y$ of minimal dimension.  We
  know that $\dim X = \dim Y$ because $[X\times \A^1] = [Y\times \A^1]$ and this
  means that $\dim X + 1 = \dim Y+1$ by \cite[Corollary 5]{liusebag}.   We claim
  that $\dim X < n-1$. Indeed, consider the commutative diagram:
  \begin{diagram}
    { & K_1(\V^{(n)}/\iota) \\
      K_1(\V^{(n)}/L) & K_1((\V^{(n)}/L)/\tilde\iota) \\ 
      K_0(\V^{(n-1)}) & K_0(\V^{(n-1)}/\iota) \\};
      \to{2-1}{2-2}^{pr_h} \to{1-2}{2-2}^{pr_v} \to{2-2}{3-2}^\partial
      \to{2-1}{3-1}^\partial \to{3-1}{3-2}^{pr'_h}
  \end{diagram}
  where the two columns are obtained from the long exact sequences in homotopy
  of the two right-hand columns of Diagram~\ref{diag:total-cofiber} and the
  horizontal morphisms are induced by the morphisms in the rows of the diagram.
  From our choice of $n$, $x$ is in $K_1(\V^{(n)}/L)$ and $pr_h(x) \neq 0$; since $x$
  is a permanent cycle in the spectral sequence for $K(\V/L)$, by
  Corollary~\ref{cor:1-surj} $pr_h(x)$ is in the image of $pr_v$; thus
  $\partial(pr_h(x)) = 0$. On the other hand,
  $\partial(pr_h(x)) = pr'_h(\partial(x)) = (pr'_h)([X]-[Y])$.

  Since $K_0(\V^{(n-1)}/\iota)$ is the free abelian group on $B_{n-1}$ and
  $pr'_h([X]-[Y]) = 0$ this means that the components of $X$ and $Y$ of
  dimension $n-1$ are pairwise birational.  In particular, since $X$ and $Y$
  were chosen to have minimal dimension, this means that $\dim X < n-1$.

  Now consider the commutative diagram
  \begin{diagram}
    { K_1(\V^{(n-1)}) & K_1(\V^{(n-1)}/L) & K_0(\V^{(n-2)}) &
      K_0(\V^{(n-1)}) \\
      K_1(\V^{(n)}) & K_1(\V^{(n)}/L) & K_0(\V^{(n-1)}) & K_0(\V^{(n)}) \\};
    \to{1-1}{1-2} \to{1-2}{1-3}^\partial \to{1-3}{1-4}^{\cdot \LL} \to{2-1}{2-2}
    \to{2-2}{2-3}^\partial \to{2-3}{2-4}^{\cdot \LL} \to{1-1}{2-1} \to{1-2}{2-2}
    \to{1-3}{2-3}^{K_0(\iota)} \to{1-4}{2-4}^{K_0(\iota)}
  \end{diagram}
  where the top and bottom rows are exact.  Then $[X]-[Y]$ lives in
  $K_0(\V^{(n-2)})$, but it is not in the image of $\partial$, since the first
  filtration degree that contains $x$ is $n$.  Thus $\LL([X]-[Y]) \neq 0$.
  However, $K_0(\iota)([X]-[Y])$ is in the image of $\partial$, so
  $\LL(K_0(\iota)([X]-[Y])) = 0$; thus $\LL([X]-[Y])$ is in the kernel of
  $K_0(\iota)$, and thus of $\psi_n$.  Therefore $[X\times \A^1] = [Y\times
  \A^1]$ in $K_0(\V)$ but they are not piecewise isomorphic.
\end{proof}

We conclude this paper with the following conjecture, which attempts to correct
Question~\ref{q:ll}.

\begin{conjecture}
  Suppose that $X$ and $Y$ are varieties over a convenient field $k$ such that
  $[X] = [Y]$ in $K_0(\V_k)$.  Then there exist varieties $X'$ and $Y'$ such
  that $[X'] \neq [Y']$,
  $[X'\times \A^1] = [Y'\times \A^1]$, and $X \amalg
  (X'\times \A^1)$ is piecewise isomorphic to $Y \amalg (Y'\times \A^1)$.
\end{conjecture}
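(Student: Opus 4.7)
The plan is to leverage the bridge between kernels of $\psi_n$ and kernels of multiplication by $\LL$ provided by Theorems~\ref{thm:zero=>notinj} and \ref{thm:kernelL}, and combine it with a dimensional induction driven by Lemma~\ref{lem:ll<=>diffs}. By \cite[Corollary 5]{liusebag} we may assume $n = \dim X = \dim Y$; the base case $n=0$ is immediate since zero-dimensional varieties of equal class are piecewise isomorphic, and one can absorb the requirement $[X'] \neq [Y']$ by taking $X', Y'$ to be any fixed pair whose difference lies in the kernel of $\LL$. The inductive step is set up so that at each stage we strip off the top-dimensional components using Lemma~\ref{lem:ll<=>diffs}, leaving a residual pair $(X_0, Y_0)$ of strictly smaller dimension with $[X_0] = [Y_0]$ in $K_0(\V_k)$.

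The interesting case occurs when the top-dimensional components of $X$ and $Y$ can only be matched by a birational isomorphism $\varphi:U\to V$ of dense opens rather than by a piecewise isomorphism. The obstruction is the class $[X_\beta \bs V] - [X_\beta \bs U]$ appearing in Lemma~\ref{lem:1st-partial-comp}, which lies in the kernel of $\psi_{n-1}$ and thus, by Theorem~\ref{thm:kernelL}, gives a correction pair $(X', Y')$ with $[X'] - [Y'] \neq 0$, $[X' \times \A^1] = [Y' \times \A^1]$, and $X'\times\A^1, Y'\times\A^1$ not piecewise isomorphic. The plan is then to construct an explicit piecewise isomorphism between $X \amalg (X' \times \A^1)$ and $Y \amalg (Y' \times \A^1)$ by extending $\varphi$ to a piecewise identification after taking a product with $\A^1$: since $k$ is convenient and the birational isomorphism $\varphi \times \id_{\A^1}$ factors through blowups and blowdowns whose discrepancy is an $\LL$-multiple in lower dimensions (as in the proof that characteristic-$0$ fields are convenient), the correction $(X' \times \A^1, Y' \times \A^1)$ is precisely what is needed to absorb this discrepancy before invoking the inductive hypothesis on the residual.

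The main obstacle will be making this last step rigorous: Theorem~\ref{thm:kernelL} only supplies a $K$-theoretic identity, whereas the conjecture demands a geometric piecewise isomorphism. I expect this to require a refinement of Theorem~\ref{thm:kernelL} so that the representatives $(X',Y')$ are produced together with an explicit piecewise identification of $X \amalg (X'\times\A^1)$ with $Y \amalg (Y'\times\A^1)$, extracted by tracing representatives of $\pi_1 K(\V/L)$ through the cofiber construction of Theorem~\ref{thm:cofiber} rather than treating it as a black box. A secondary concern is cascade: each residual pair in the induction may need its own correction, so one must show that the total correction is bounded in dimension and that the process terminates; the dimension count in Corollary~\ref{cor:ls} and the explicit control in the proof of Theorem~\ref{thm:kernelL} (where $\dim X' < n-1$) suggest that each correction drops the filtration degree strictly, which would make the \emph{a priori} cascade finite.
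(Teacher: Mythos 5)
The statement you set out to prove is presented in the paper as a \emph{conjecture}: the paper offers no proof of it, and explicitly frames it as the open question toward which its theorems point. So the only issue is whether your proposal closes that gap, and it does not. The first problem is the inductive strip-off step. Lemma~\ref{lem:ll<=>diffs} requires $[X]=[Y]$ in $K_0(\V^{(n,n-1)})$, i.e.\ equality already on the $E^1_{0,n}$ term of the spectral sequence. From the hypothesis $[X]=[Y]$ in $K_0(\V_k)$ you only obtain equality on $E^\infty_{0,n}$, that is, modulo the images of differentials emanating from the birational automorphism groups. Whether a difference killed by such a differential can be realized geometrically as a piecewise isomorphism is exactly the content of Theorem~\ref{thm:ll<=>diffs}, and by the Borisov--Karzhemanov counterexamples this realization genuinely fails; that failure is the substance of the conjecture, not a technicality one can defer.

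The second and more serious problem is that you invoke Theorem~\ref{thm:kernelL} in the wrong direction. That theorem takes as \emph{input} a class in the kernel of multiplication by $\LL$ and produces from it a pair witnessing a $\psi_n$-kernel element. Your argument starts from the obstruction $[X_\beta\bs V]-[X_\beta\bs U]$, which lies in the kernel of $K_0(\V^{(n-1)})\rto K_0(\V^{(n)})$, and asserts that Theorem~\ref{thm:kernelL} converts it into a correction pair $(X',Y')$ with $[X'\times\A^1]=[Y'\times\A^1]$. No result in the paper supplies this converse; the claim that every obstruction to piecewise isomorphism is accounted for by the kernel of $\cdot\LL$ is precisely what the conjecture asserts, so this step is circular. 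Your final paragraph already concedes that upgrading the $K$-theoretic identity to an explicit piecewise isomorphism is unresolved; together with the directional error, the proposal is a reasonable strategy sketch consistent with the paper's philosophy, but it is not a proof, and breaking the circularity would require genuinely new input (for instance a structural description of $\ker\psi_n$, or of the $E^\infty_{1,*}$ terms, beyond what the paper establishes).
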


This conjecture says that elements in the kernel of multiplication by $\LL$ are
the only possible ``errors'' to equality in the Grothendieck ring implying that
varieties are piecewise isomorphic.  Note that the definition of a convenient
field is the statement that this is the case for inclusions $K_0(\V^{(n)}) \rto
K_0(\V^{(n+1)})$.  In addition, Corollary~\ref{cor:ls} states that if $[X]=[Y]$
for certain $X$ and $Y$ then $X$ and $Y$ are stably birational: that the error
to their being birational to begin with is a power of $\LL$.

\bibliographystyle{IZ}
\bibliography{IZ-all}
\end{document}